\newtheorem{theorem}{Theorem}[section]
\newtheorem{lemma}[theorem]{Lemma}
\newtheorem{proposition}[theorem]{Proposition}
\newtheorem{corollary}[theorem]{Corollary}
\theoremstyle{definition}
\newtheorem{definition}[theorem]{Definition}
\theoremstyle{remark}
\newtheorem{remark}[theorem]{Remark}
\numberwithin{equation}{section}
\newcommand{\quotient}{/\hspace{-1.2mm}/}
\newcommand{\ChowQ}{/\hspace{-1.2mm}/_{Ch}}
\newcommand{\M}{\overline{\mathcal{M}}}
\begin{document}

\title{Conformal Blocks and Rational Normal Curves}
\author{Noah Giansiracusa}
\address{Department of Mathematics, Brown University, Providence RI 02912}
\email{noahgian@math.brown.edu}
\thanks{Partial support provided by funds from NSF DMS-0901278.}
\commby{Michel Brion}

\begin{abstract}
We prove that the Chow quotient parameterizing configurations of $n$ points in $\mathbb{P}^d$ which generically lie on a rational normal curve is isomorphic to $\M_{0,n}$, generalizing the well-known $d=1$ result of Kapranov.  In particular, $\M_{0,n}$ admits birational morphisms to all the corresponding geometric invariant theory (GIT) quotients.  For symmetric linearizations the polarization on each GIT quotient pulls back to a divisor that spans the same extremal ray in the symmetric nef cone of $\M_{0,n}$ as a conformal blocks line bundle.  A symmetry in conformal blocks implies a duality of point-configurations that comes from Gale duality and generalizes a result of Goppa in algebraic coding theory.  In a suitable sense, $\M_{0,2m}$ is fixed pointwise by the Gale transform when $d=m-1$ so stable curves correspond to self-associated configurations.
\end{abstract}

\maketitle

\section{Introduction}

\subsection{Configurations of points} A natural way to compactify the moduli space of $n$ distinct points on the line, $\mathcal{M}_{0,n} = ((\mathbb{P}^1)^n\smallsetminus \{\text{diagonals}\})/\text{Aut}(\mathbb{P}^1)$, is to allow the points to collide by reintroducing the diagonals.  One can take a geometric invariant theory (GIT) quotient $(\mathbb{P}^1)^n\quotient_L\text{SL}_2$, but this requires a choice of linearization $L$.  Kapranov proved that the Chow quotient $(\mathbb{P}^1)^n\ChowQ\text{SL}_2$ is isomorphic to the standard compactification $\M_{0,n}$ \cite[Theorem 4.1.8]{Kap93} and that in general the Chow quotient maps to all GIT quotients \cite[Theorem 0.4.3]{Kap93}: $\M_{0,n} \cong (\mathbb{P}^1)^n\ChowQ\text{SL}_2 \rightarrow (\mathbb{P}^1)^n\quotient_L\text{SL}_2.$  

We add more flexibility by considering configurations of points in $\mathbb{P}^d$.  If \[U_{d,n} := \{(p_1,\ldots,p_n)\in(\mathbb{P}^d)^n~|~p_i\text{ are distinct points of a rational normal curve}\}\] then $U_{d,n}/\text{SL}_{d+1} \cong \mathcal{M}_{0,n}$ for $d \le n-3$.  The key is that now we compactify not only by allowing the points to collide, but also by degenerating the rational normal curve supporting the points.  By taking the topological closure \[V_{d,n} := \overline{U}_{d,n} \subseteq (\mathbb{P}^d)^n\] with reduced induced structure, we get the space of configurations that lie on what we call a \emph{quasi-Veronese} curve (see Definitions \ref{def:Veronese}, \ref{def:locus}, and Lemma \ref{lem:Vdef}).  The GIT quotients $V_{d,n}\quotient_L\text{SL}_{d+1}$ provide a family of birational models of $\M_{0,n}$.  For $d=1$ these have been studied extensively, e.g., \cite{DM86,GHP88,Kap93,GIT94,KM96a,Hu99,AL02,Has03,Pol95,AS08,Bol10,HMSV10}. For $d=2$ they occur as a special case of a construction in \cite{Sim08,GS09}. For $d \ge 3$ they do not seem to have appeared in the literature previously.

\begin{theorem}\label{thm:morphism}
For any $d \le n-3$ there is an isomorphism $V_{d,n}\ChowQ\text{SL}_{d+1}\cong\M_{0,n}$, and for any effective linearization $L$ on $V_{d,n}$ there is a birational morphism $\varphi : \M_{0,n} \rightarrow V_{d,n}\quotient_L\text{SL}_{d+1}$ extending $\mathcal{M}_{0,n}~\widetilde{\rightarrow}~U_{d,n}/\text{SL}_{d+1}$.\end{theorem}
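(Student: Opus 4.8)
Since any two rational normal curves in $\mathbb{P}^d$ are projectively equivalent and the stabilizer of one in $\text{PGL}_{d+1}$ is the image of $\text{Aut}(\mathbb{P}^1)=\text{PGL}_2$ under the degree-$d$ Veronese embedding, a configuration of $n$ distinct points on a rational normal curve is, modulo $\text{SL}_{d+1}$, the Veronese image of $n$ distinct points of $\mathbb{P}^1$ considered up to $\text{PGL}_2$; since $d\le n-3$ the generic such configuration has trivial projective stabilizer, so $U_{d,n}/\text{SL}_{d+1}\cong\bigl((\mathbb{P}^1)^n\smallsetminus\Delta\bigr)/\text{PGL}_2=\mathcal{M}_{0,n}$. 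Consequently a generic $\text{SL}_{d+1}$-orbit closure in $(\mathbb{P}^d)^n$ is an irreducible subvariety of dimension $(d+1)^2-1$ contained in $V_{d,n}$, it defines a point of the relevant Chow variety, and by construction $V_{d,n}\ChowQ\text{SL}_{d+1}$ is the closure of the locus of these points; thus it is an irreducible projective variety containing $\mathcal{M}_{0,n}$ as a dense open subset, and the task is to identify this compactification with $\M_{0,n}$.

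The plan is to produce a morphism $f\colon\M_{0,n}\to V_{d,n}\ChowQ\text{SL}_{d+1}$ that is the identity on $\mathcal{M}_{0,n}$ and then show it is an isomorphism. For $f$, I would construct over $\M_{0,n}$ a family of $\text{SL}_{d+1}$-invariant cycles in $V_{d,n}$ extending the tautological family of generic orbit closures: to a stable curve $(C,x_1,\dots,x_n)$ one associates a quasi-Veronese realization --- a degree-$d$ quasi-Veronese curve in the sense of Lemma \ref{lem:Vdef} obtained by distributing $d$ among the components of $C$ (some of which may contract) and Veronese-embedding them so as to meet correctly along the nodes --- and then the $\text{SL}_{d+1}$-invariant cycle that is the flat limit of generic orbit closures along a one-parameter degeneration onto $(C,x_\bullet)$; this cycle is a reduced union of full-dimensional orbit closures glued to one another along configurations on more degenerate quasi-Veronese curves. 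The hard part will be to show that these cycles do vary in a flat family over all of $\M_{0,n}$, equivalently to pin down exactly which reduced cycle is the flat limit and that it depends only on $(C,x_\bullet)$ and not on the degeneration; this is where the explicit structure of $V_{d,n}$ from Lemma \ref{lem:Vdef} and a careful local analysis of degenerations of point configurations on a rational normal curve enter, and I expect it to be the principal obstacle. Granting it, $f$ is a morphism, it is birational since it restricts to the identity on $\mathcal{M}_{0,n}$, and it is surjective because both $\M_{0,n}$ and the Chow quotient are proper.

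To upgrade $f$ to an isomorphism I would argue that it is injective on the boundary: from the cycle $Z=f(C,x_\bullet)$ one recovers, component by component, the sub-configurations carried by the pieces, hence the dual graph of $C$ and the cross-ratios on its components, so that $(C,x_\bullet)$ is reconstructed from $Z$; this reconstruction is a morphism $V_{d,n}\ChowQ\text{SL}_{d+1}\to\M_{0,n}$ which is inverse to $f$ over $\mathcal{M}_{0,n}$, hence a two-sided inverse. Equivalently, once one knows the Chow quotient is normal, Zariski's Main Theorem reduces everything to the quasi-finiteness of the proper birational morphism $f$, which is again this injectivity statement; pinning it down is the same as pinning down the flat limit in the previous step.

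For the final clause, granting the isomorphism $V_{d,n}\ChowQ\text{SL}_{d+1}\cong\M_{0,n}$, the birational morphism $\varphi\colon\M_{0,n}\to V_{d,n}\quotient_L\text{SL}_{d+1}$ for an effective linearization $L$ is then immediate from Kapranov's general result that the Chow quotient maps to every GIT quotient \cite[Theorem 0.4.3]{Kap93}. That $\varphi$ extends $\mathcal{M}_{0,n}\,\widetilde{\rightarrow}\,U_{d,n}/\text{SL}_{d+1}$ follows because a generic configuration on a rational normal curve is $L$-stable for every effective $L$ --- a routine Hilbert--Mumford computation --- so the common dense open $\mathcal{M}_{0,n}$ is carried isomorphically onto the corresponding locus of stable orbits; in particular $\varphi$ is birational.
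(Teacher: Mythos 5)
Your setup is right and your architecture matches the paper's: identify $U_{d,n}/\text{SL}_{d+1}$ with $\mathcal{M}_{0,n}$, extend the resulting embedding into the Chow variety to a morphism from $\M_{0,n}$, show it is an isomorphism onto its image, and then invoke Kapranov's Theorem 0.4.3 for the GIT statement. But the proposal has a genuine gap at exactly the point you flag yourself: you write ``Granting it, $f$ is a morphism,'' where ``it'' is the claim that the flat limit of generic orbit closures along a one-parameter degeneration depends only on the stable curve $(C,x_\bullet)$ and not on the chosen smoothing. That claim is not a technical detail to be granted --- it is the entire content of the existence half of the theorem, and essentially all of the paper's work goes into it. The paper proves it by (i) writing down the candidate cycle $Z(\mathcal{C})$ explicitly as the union, over all partitions $d=d_1+\cdots+d_r$ among the components of $C$, of the orbit closures of the configurations obtained from the maps $\phi_L$ with $L=\mathcal{O}_C(q_1+\cdots+q_d)$; (ii) showing $Z(\mathcal{C}_k)\subseteq\overline{Z(\mathcal{C}_K)}_k$ for any family over a DVR, by lifting the auxiliary points and the sections of $L$ (Grauert's theorem, using $h^1(L)=0$); and (iii) proving the reverse containment by an inequality of homology classes, which degenerates to the maximally degenerate case and comes down to a concrete incidence count: for generic linear subspaces $L_i$ with $\sum\operatorname{codim}(L_i)=(d+1)(e+1)-1$ there is a unique degree-$e$ map $C\to\mathbb{P}^d$, linear on components, with $\psi(p_i)\in L_i$, proved by induction on $n$ using a projection argument for skew subspaces. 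None of this is present or even gestured at in your proposal beyond the acknowledgment that it is ``the principal obstacle.''

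The second half --- upgrading the proper birational morphism to an isomorphism --- is also only asserted. Your proposed reconstruction of the dual graph and cross-ratios from the cycle $Z$ is plausible, but it presupposes the precise description of $Z(\mathcal{C})$ you deferred, and you would need to address components assigned degree zero (whose marked points all collapse to one point in $\mathbb{P}^d$, so that a single orbit closure in the union does not determine the cross-ratios on that component; one must use the full union over all partitions). The paper avoids this entirely by a different and cleaner route: project to all subsets $I$ of size $d+3$, use Kapranov's Gale-duality isomorphism $(\mathbb{P}^d)^{d+3}\ChowQ\text{SL}_{d+1}\cong(\mathbb{P}^1)^{d+3}\ChowQ\text{SL}_2\cong\M_{0,d+3}$ to settle the case $n=d+3$, and then invoke the fact (from \cite{GG10}) that $\M_{0,n}\to\prod_I\M_{0,d+3}$ is an isomorphism onto its image. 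Your closing paragraph on the GIT quotients is fine and agrees with the paper, but as it stands the proposal is a correct plan with its two load-bearing steps left unexecuted.
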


We prove this by adapting ideas from \cite{GG11} to show that $\M_{0,n}$ admits a map to $\text{Chow}((\mathbb{P}^d)^n)$ that is an isomorphism onto its image, $V_{d,n}\ChowQ \text{SL}_{d+1}$.  The statement about GIT quotients then follows from Kapranov's general result on Chow and GIT quotients.  We provide an explicit description of $\varphi$ in \S\ref{section:modular}, compute which F-curves it contracts in \S\ref{section:Fcurves}, and show that it factors through Hassett's space of weighted pointed curves $\M_{0,L}$ in Proposition \ref{prop:factor}.

\subsection{Conformal blocks} Conformal field theory, first studied in physics then adopted by the mathematical community, has found applications in many areas.  See \cite{TUY89} for background.  For our purposes, it leads to a collection of vector bundles on $\M_{g,n}$.  The data required for each one is a Lie algebra, a non-negative integer called the level, and an $n$-tuple of dominant integral weights.  For $g=0$ these bundles are globally generated \cite[Lemma 2.5]{Fak11}.  For $\mathfrak{sl}_2$ with the so-called critical level, the weights can be chosen so that the determinant line bundle of the conformal blocks vector bundle spans the same ray in $N^1(\M_{0,n})$ as the pull-back of the GIT polarization on $(\mathbb{P}^1)^n\quotient_L\text{SL}_2$ along Kapranov's morphism $\M_{0,n} \rightarrow (\mathbb{P}^1)^n\quotient_L\text{SL}_2$ \cite[Theorem 4.5]{Fak11}.

In \cite{AGSS10} a specific family of conformal blocks line bundles is studied, corresponding to $\mathfrak{sl}_n$ with level $l=1$ and fundamental dominant weights $(\omega_k,\ldots,\omega_k), k\in\{2,\ldots,n-2\}$.  Let us denote these by $D_k^{\mathfrak{sl}_n}$.  The spaces $V_{d,n}$ provide the correct generalization of $V_{1,n} = (\mathbb{P}^1)^n$ to extend Fakhruddin's result about $\mathfrak{sl}_2$ bundles to these $\mathfrak{sl}_n$ bundles, in the following sense.  Each GIT quotient $V_{d,n}\quotient_L\text{SL}_{d+1}$ comes with a polarization, which by Theorem \ref{thm:morphism} can be pulled back to a line bundle on $\M_{0,n}$.  For the $S_n$-invariant linearization $L$ denote the GIT polarization by $\mathcal{L}_d\in\text{Pic}(V_{d,n}\quotient_L\text{SL}_{d+1})$.

\begin{theorem}\label{thm:CB}
The line bundles $\varphi^*\mathcal{L}_d$ and $D^{\mathfrak{sl}_n}_{d+1}$ span the same ray in $\text{N}^1(\M_{0,n})$.
\end{theorem}

By \cite[Remark 5.3]{Fak11}, the line bundle $D^{\mathfrak{sl}_n}_2$ can be identified with the $\mathfrak{sl}_2$ conformal blocks determinant inducing the map $\M_{0,n} \rightarrow (\mathbb{P}^1)^n\quotient_L \text{SL}_2$, so the $d=1$ case of this theorem is due to Fakhruddin.

\begin{corollary}\label{cor:extremal}
For $d=1,\ldots,\lfloor\frac{n}{2}\rfloor-1$, the line bundles $\varphi^*\mathcal{L}_d$ span distinct extremal rays of the symmetric nef cone of $\M_{0,n}$.
\end{corollary}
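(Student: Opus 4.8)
The plan is to reduce to the conformal blocks divisors via Theorem~\ref{thm:CB}, dispatch extremality by citation, and prove distinctness by an intersection computation on $\M_{0,n}$.

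First I would apply Theorem~\ref{thm:CB}: for $d=1,\ldots,\lfloor n/2\rfloor-1$ the class $\varphi^*\mathcal{L}_d$ is a positive rational multiple of $D^{\mathfrak{sl}_n}_{d+1}$, so it is enough to show that the divisors $D^{\mathfrak{sl}_n}_k$, $k\in\{2,\ldots,\lfloor n/2\rfloor\}$, span distinct extremal rays of the symmetric nef cone. Genus-$0$ conformal blocks line bundles are globally generated by \cite[Lemma 2.2]{Fak09}, hence nef, and the $D_k^{\mathfrak{sl}_n}$ are $S_n$-invariant by construction, so each spans a ray of $\overline{\mathrm{Nef}}(\M_{0,n})^{S_n}$. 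Extremality of these rays is established in \cite{AGSS10}; and the conformal blocks symmetry $\omega_k\leftrightarrow\omega_{n-k}$ identifies the rays of $D_k^{\mathfrak{sl}_n}$ and $D_{n-k}^{\mathfrak{sl}_n}$, which is why the range is both sufficient and sharp.

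For distinctness I would argue on $\M_{0,n}$ directly. Since $\varphi=\varphi_d\colon\M_{0,n}\to V_{d,n}\quotient_L\mathrm{SL}_{d+1}$ is a birational morphism to a projective variety on which $\mathcal{L}_d$ is ample, for every irreducible curve $C\subseteq\M_{0,n}$ one has $\varphi^*\mathcal{L}_d\cdot C=0$ precisely when $\varphi_d$ contracts $C$, and $\varphi^*\mathcal{L}_d\cdot C>0$ otherwise. Applying this to the $F$-curves and invoking the description in \S\ref{section:Fcurves} of exactly which $F$-curves $\varphi_d$ contracts, one sees that the set of $F$-curve classes on which $\varphi^*\mathcal{L}_d$ vanishes genuinely varies with $d$ over $1\le d\le\lfloor n/2\rfloor-1$; since proportional nef classes have the same vanishing locus on the finite set of $F$-curve classes, the rays must be pairwise distinct. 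Alternatively, and without using \S\ref{section:Fcurves}, one can compute $D_k^{\mathfrak{sl}_n}\cdot F$ for enough $S_n$-symmetric $F$-curves to span $N_1(\M_{0,n})^{S_n}$ — for instance those of type $(1,1,j,n-2-j)$ — by means of Fakhruddin's $F$-curve formula \cite{Fak09} specialized to $\widehat{\mathfrak{sl}}_n$ at level $1$ (where the fusion rules are those of $\mathbb{Z}/n$), and then check that the resulting $(\lfloor n/2\rfloor-1)\times(\lfloor n/2\rfloor-1)$ matrix of intersection numbers is nonsingular, giving linear independence of the $D_k^{\mathfrak{sl}_n}$ in $N^1(\M_{0,n})^{S_n}$.

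I expect the crux to be this last point: extracting from \S\ref{section:Fcurves} (or from the level-$1$ fusion rules) the precise list of $F$-curves contracted by $\varphi_d$ and checking that it really changes with $d$ throughout the stated range — heuristically, increasing $d$ enlarges the family of admissible degenerations of the quasi-Veronese curve supporting the configuration and hence alters which boundary strata of $\M_{0,n}$ are collapsed. Once that variational behavior is pinned down, only a finite linear-algebra verification remains.
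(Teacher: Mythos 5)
Your argument for extremality is exactly the paper's: Theorem \ref{thm:CB} identifies the ray of $\varphi^*\mathcal{L}_d$ with that of $D^{\mathfrak{sl}_n}_{d+1}$, and extremality of the latter is imported from \cite{AGSS10}. Where you diverge is on distinctness. The paper takes this from \cite{AGSS10} as well --- the result cited there is precisely that $D^{\mathfrak{sl}_n}_2,\ldots,D^{\mathfrak{sl}_n}_{\lfloor n/2\rfloor}$ span \emph{distinct} extremal rays --- so once Theorem \ref{thm:CB} is in hand the corollary is immediate and no further computation is needed. You instead propose to re-derive distinctness on $\M_{0,n}$ itself, either by showing that the set of $F$-curves contracted by $\varphi_d$ (Proposition \ref{prop:symcont}) genuinely varies with $d$, or by checking nonsingularity of an intersection matrix $D^{\mathfrak{sl}_n}_k\cdot F_{1,1,j}$ computed from Fakhruddin's level-one formula. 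Both routes are sound in principle: since $\mathcal{L}_d$ is ample on the GIT quotient, $\varphi^*\mathcal{L}_d\cdot C=0$ exactly when $\varphi_d$ contracts $C$, and proportional nef classes have identical vanishing loci on $F$-curves, so distinct contraction patterns do force distinct rays. But you correctly identify that the actual verification that these patterns are pairwise distinct across $1\le d\le\lfloor n/2\rfloor-1$ is the crux, and you leave it as a sketch; as written the distinctness half of your proof is not complete. The practical upshot of the comparison: your route would make the corollary independent of the distinctness statement in \cite{AGSS10} (at the cost of a finite but nontrivial combinatorial check, essentially the one carried out in \S6--9 of that paper), whereas the paper's route gets it for free from the citation already needed for extremality.
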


In \cite{AGSS10} it was shown that $D^{\mathfrak{sl}_n}_2,\ldots,D^{\mathfrak{sl}_n}_{\lfloor\frac{n}{2}\rfloor}$ span distinct extremal rays of the symmetric nef cone.  They prove this by finding, for each $d$, a family of $\rho -1$ independent curves contracted by the complete linear system $|D^{\mathfrak{sl}_n}_{d+1}|$, where $\rho := \text{dim}(N^1(\M_{0,n})^{S_n})$.  Because of this extremality result, to prove Theorem \ref{thm:CB} it is enough to show that the morphism $\varphi$ contracts the same set of independent curves.  We do this in \S\ref{section:GIT} by studying a formula of Fakhruddin for the degrees of conformal blocks bundles restricted to F-curves.

\subsection{Gale duality} The symmetry of the Dynkin diagram $\circ-\circ-\cdots-\circ-\circ$ for $\mathfrak{sl}_n$ implies that $D^{\mathfrak{sl}_n}_{k} = D^{\mathfrak{sl}_n}_{n-k}$.  This observation, which was brought to my attention by Valery Alexeev, together with Theorem \ref{thm:CB}, implies the following:

\begin{corollary}\label{cor:Gale}
For $S_n$-invariant $L$ there is an isomorphism of normalizations $(V_{d,n}\quotient_L\text{SL}_{d+1})^\nu \cong (V_{n-d-2,n}\quotient_L\text{SL}_{n-d-1})^\nu$.
\end{corollary}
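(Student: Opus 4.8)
The plan is to deduce the corollary formally from Theorems~\ref{thm:morphism} and~\ref{thm:CB} together with the Dynkin-diagram identity $D^{\mathfrak{sl}_n}_k = D^{\mathfrak{sl}_n}_{n-k}$ noted above. I work in the range $1 \le d \le n-3$, which is precisely where Theorem~\ref{thm:morphism} applies simultaneously to $V_{d,n}$ and $V_{n-d-2,n}$; since up to a tensor power there is a unique $S_n$-invariant $\text{SL}_{d+1}$-linearization on $(\mathbb{P}^d)^n$, namely the democratic $\mathcal{O}(1,\dots,1)$, the symbol ``$\quotient_L$'' refers to compatible choices on the two sides and each GIT quotient is unambiguous.

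First I would put $X_d := (V_{d,n}\quotient_L\text{SL}_{d+1})^\nu$ with normalization map $\nu_d$. As $\M_{0,n}$ is smooth, hence normal, the birational morphism $\varphi$ of Theorem~\ref{thm:morphism} lifts uniquely to a birational morphism $\widetilde\varphi_d : \M_{0,n} \to X_d$, and the line bundle $M_d := \widetilde\varphi_d^*\,\nu_d^*\mathcal{L}_d = \varphi^*\mathcal{L}_d$ on $\M_{0,n}$ is the pull-back of the ample bundle $\nu_d^*\mathcal{L}_d$. By Theorem~\ref{thm:CB}, $M_d$ spans the same ray in $\text{N}^1(\M_{0,n})$ as $D^{\mathfrak{sl}_n}_{d+1}$; applying the same theorem with $d$ replaced by $n-d-2$, the bundle $M_{n-d-2}$ spans the ray of $D^{\mathfrak{sl}_n}_{n-d-1}$. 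Since $D^{\mathfrak{sl}_n}_{d+1} = D^{\mathfrak{sl}_n}_{n-(d+1)} = D^{\mathfrak{sl}_n}_{n-d-1}$, the classes $M_d$ and $M_{n-d-2}$ span the same ray.

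It then remains to upgrade this numerical coincidence to an isomorphism of varieties, and this is the step that needs the most care. Because $\M_{0,n}$ is a smooth rational variety, $\text{Pic}^0(\M_{0,n}) = 0$ and numerical equivalence coincides with linear equivalence on it, so $M_d^{\otimes a} \cong M_{n-d-2}^{\otimes b}$ in $\text{Pic}(\M_{0,n})$ for suitable positive integers $a,b$. On the other hand, $\widetilde\varphi_d$ is a projective birational morphism onto the normal variety $X_d$, so $\widetilde\varphi_{d*}\mathcal{O}_{\M_{0,n}} = \mathcal{O}_{X_d}$, and since $\nu_d^*\mathcal{L}_d$ is ample, the projection formula together with the description of a polarized variety as the Proj of the section ring of its polarization gives \[X_d \;\cong\; \text{Proj}\bigoplus_{m\ge 0} H^0\bigl(X_d,(\nu_d^*\mathcal{L}_d)^{\otimes m}\bigr) \;\cong\; \text{Proj}\bigoplus_{m\ge 0} H^0(\M_{0,n}, M_d^{\otimes m}).\] Passing to a Veronese subring does not change the Proj, so \[X_d \;\cong\; \text{Proj}\bigoplus_{m\ge 0} H^0(\M_{0,n}, M_d^{\otimes ma}) \;=\; \text{Proj}\bigoplus_{m\ge 0} H^0(\M_{0,n}, M_{n-d-2}^{\otimes mb}) \;\cong\; X_{n-d-2},\] which is exactly the claimed isomorphism of normalizations. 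I expect the only genuine obstacle to be the care needed in this last paragraph: Theorem~\ref{thm:CB} delivers information only in $\text{N}^1(\M_{0,n})$, so one really must invoke both the vanishing of $\text{Pic}^0(\M_{0,n})$ and the reconstruction of the normalized GIT quotient from the section ring of its pulled-back polarization (where normality of $X_d$ and birationality of $\widetilde\varphi_d$ are essential); the rest is formal bookkeeping.
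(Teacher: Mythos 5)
Your proof is correct and follows essentially the same route as the paper: combine Theorem \ref{thm:CB} (applied to both $d$ and $n-d-2$) with the Dynkin symmetry $D^{\mathfrak{sl}_n}_{d+1}=D^{\mathfrak{sl}_n}_{n-d-1}$ to conclude that the two pulled-back polarizations span the same ray, then recover the normalized quotients from that numerical data. The only difference is cosmetic: where the paper invokes in one line that the two morphisms have the same Stein factorization up to a finite map, you spell this out via $\text{Pic}^0(\M_{0,n})=0$, the projection formula, and the identification of each $X_d$ with the Proj of the section ring of its pulled-back polarization --- a more explicit but equivalent justification.
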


This result, as was suggested by Brendan Hassett, is related to a classical duality of configurations of points in projective space known as the Gale transform \cite[Corollary III.1]{DO88}: \[(\mathbb{P}^d)^n\quotient_L\text{SL}_{d+1} \cong (\mathbb{P}^{n-d-2})^n\quotient_L\text{SL}_{n-d-1}.\]  The Gale transform of a configuration of distinct points lying on a rational normal curve in $\mathbb{P}^d$ lies on a rational normal curve in $\mathbb{P}^{n-d-2}$.  This was proven by Goppa in the context of algebraic coding theory to show that the dual of a Goppa code is a Goppa code.  See \cite{EP00} for a modern treatment, generalization, and reference to this historical anecdote.  In \S\ref{section:configs} we explain how Goppa's result, when combined with the Dolgachev-Ortland form of Gale duality, implies an isomorphism \[V_{d,n}\quotient_L\text{SL}_{d+1} \cong V_{n-d-2,n}\quotient_L\text{SL}_{n-d-1}\] which we interpret as saying that the Gale transform preserves configurations supported on quasi-Veronese curves, not just rational normal curves.  It is curious that, up to normalization, this also follows from a seemingly unrelated symmetry of conformal blocks, via Theorem \ref{thm:CB}.

For $n=2m$ and $d=m-1$ one can discuss self-associated configurations, i.e., points of $(\mathbb{P}^{m-1})^{2m}\ChowQ\text{SL}_m$ which are fixed by the Gale transform.  By Theorem \ref{thm:morphism} we can identify $\M_{0,2m}$ with $V_{m-1,2m}\ChowQ\text{SL}_m$ and thus view it as a subvariety of $(\mathbb{P}^{m-1})^{2m}\ChowQ\text{SL}_m$.  In \S\ref{section:selfassoc} we show that each stable curve $(C,p_1,\ldots,p_{2m})$ corresponds under this identification to a self-associated configuration.

\subsection*{Acknowledgements.}
I would like to thank Dan Abramovich, Valery Alexeev, Najmuddin Fakhruddin, Angela Gibney, Danny Gillam, Brendan Hassett, and Dave Swinarski for helpful conversations, and especially Gibney for suggesting this project.  

\section{Quasi-Veronese curves}

Throughout we work over $\mathbb{C}$.  We are interested in configurations of points lying on rational normal curves and their degenerations.

\begin{definition}\label{def:Veronese}
Let $\mathcal{H}_d \subseteq \text{Hilb}^{dt+1}(\mathbb{P}^d)$ denote the closed component of the Hilbert scheme parameterizing rational normal curves of degree $d$ and their degenerations.  We call such curves \emph{quasi-Veronese} curves.
\end{definition}

\begin{definition}\label{def:locus}
Denote by $\text{RNC}(d,n)\subseteq\mathcal{H}_d\times(\mathbb{P}^d)^n$ the incidence locus of $n$-pointed quasi-Veronese curves, and let $V_{d,n} := \pi_2(\text{RNC}(d,n))\subseteq (\mathbb{P}^d)^n$, where $\pi_2 : \mathcal{H}_d\times(\mathbb{P}^d)^n \rightarrow (\mathbb{P}^d)^n$ is the projection.
\end{definition}

Recall from the introduction that $U_{d,n}\subseteq(\mathbb{P}^d)^n$ denotes the locus of configurations of $n$ distinct points that lie on a rational normal curve in $\mathbb{P}^d$.

\begin{lemma}\label{lem:Vdef}
The locus $V_{d,n}$ coincides with the closure $\overline{U}_{d,n}\subseteq(\mathbb{P}^d)^n$.
\end{lemma}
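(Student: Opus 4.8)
The plan is to prove the two inclusions $V_{d,n}\subseteq\overline{U}_{d,n}$ and $\overline{U}_{d,n}\subseteq V_{d,n}$ separately, using that $\mathcal{H}_d$ is irreducible with a dense open locus parametrizing honest rational normal curves. First I would record the basic structure of $\mathcal{H}_d$: by Definition \ref{def:Veronese} it is the closure in $\text{Hilb}^{dt+1}(\mathbb{P}^d)$ of the locus $\mathcal{H}_d^\circ$ of smooth rational normal curves, which is a single $\text{PGL}_{d+1}$-orbit and hence irreducible; therefore $\mathcal{H}_d$ is irreducible and $\mathcal{H}_d^\circ$ is dense in it. Correspondingly, inside $\text{RNC}(d,n)\subseteq\mathcal{H}_d\times(\mathbb{P}^d)^n$ the preimage of $\mathcal{H}_d^\circ$ under the first projection is a dense open subset whose fiber over a smooth rational normal curve $C$ is $C^n$; the configurations with all $p_i$ distinct form a dense open subset of that, and its image under $\pi_2$ is exactly $U_{d,n}$.

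For the inclusion $\overline{U}_{d,n}\subseteq V_{d,n}$: since $\mathcal{H}_d\times(\mathbb{P}^d)^n$ is projective, $\pi_2$ is a closed map, so $V_{d,n}=\pi_2(\text{RNC}(d,n))$ is closed in $(\mathbb{P}^d)^n$; as it contains $U_{d,n}$ it contains $\overline{U}_{d,n}$. This direction is essentially formal. For the reverse inclusion $V_{d,n}\subseteq\overline{U}_{d,n}$, I would argue that $\text{RNC}(d,n)$ is irreducible: it fibers over $\mathcal{H}_d$ with the fiber over a curve $C$ being the $n$-fold product of the subscheme parametrized by $C$, and over the dense open $\mathcal{H}_d^\circ$ these fibers are the irreducible varieties $C^n$ of constant dimension $n$, so the preimage of $\mathcal{H}_d^\circ$ in $\text{RNC}(d,n)$ is irreducible (a projective-bundle-like fibration over an irreducible base with irreducible equidimensional fibers) and dense. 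Hence $\text{RNC}(d,n)$ is irreducible, so its image $V_{d,n}=\pi_2(\text{RNC}(d,n))$ is irreducible, and it is the closure of the image of any dense subset — in particular of the dense open subset mapping to $U_{d,n}$. Therefore $V_{d,n}=\overline{U}_{d,n}$.

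The main obstacle is controlling the fibers of $\text{RNC}(d,n)\to\mathcal{H}_d$ and the constancy of dimension over the boundary $\mathcal{H}_d\smallsetminus\mathcal{H}_d^\circ$: a priori a degenerate quasi-Veronese curve could be non-reduced, reducible, or of unexpected dimension, which would threaten both the equidimensionality needed for irreducibility of $\text{RNC}(d,n)$ and the claim that boundary configurations are genuine limits of configurations on smooth rational normal curves. I expect this to be handled by the standard fact that every point of $\mathcal{H}_d$, being in the closure of the $\text{PGL}_{d+1}$-orbit of smooth rational normal curves, parametrizes a connected curve of arithmetic genus $0$ and degree $d$ whose one-dimensional reduced structure is a chain of rational normal curves in complementary linear subspaces (a standard analysis of flat limits of rational normal curves); thus the incidence fibers remain $n$-dimensional and one can choose, for any point of $\text{RNC}(d,n)$ over the boundary, a one-parameter family inside $\text{RNC}(d,n)$ whose general member lies over $\mathcal{H}_d^\circ$ with distinct marked points, exhibiting the given configuration in $\overline{U}_{d,n}$. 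Once this limit-of-configurations picture is in place, the two inclusions close up and the lemma follows; it is worth noting that the only properties of $\mathcal{H}_d$ actually used are its irreducibility and the density of the smooth locus, so the argument is robust.
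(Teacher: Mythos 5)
Your proposal is correct, and the first inclusion is exactly the paper's argument: $\text{RNC}(d,n)$ is closed in the projective variety $\mathcal{H}_d\times(\mathbb{P}^d)^n$ and $\pi_2$ is proper, so $V_{d,n}$ is closed and contains $U_{d,n}$. For the reverse inclusion the paper is more direct: by the very definition of $\mathcal{H}_d$ every quasi-Veronese curve is a flat limit of rational normal curves, so a configuration on such a curve is a limit of configurations on rational normal curves and hence lies in $\overline{U}_{d,n}$. You route this instead through irreducibility of $\text{RNC}(d,n)$, which works but needs a better justification than the one you give: irreducible equidimensional fibers over an irreducible base do \emph{not} force every component of the total space to dominate the base (e.g.\ $\{v=0\}\cup\{y=u=0\}\subseteq\mathbb{A}^1_y\times\mathbb{A}^2_{u,v}$ projecting to $\mathbb{A}^1_y$ has pure $1$-dimensional fibers but a vertical component). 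What actually saves the density claim is that $\text{RNC}(d,n)$ is the $n$-fold fiber product of the universal curve over $\mathcal{H}_d$, which is flat because the universal family over a Hilbert scheme is flat by construction; flat finite-type morphisms are open, so every component dominates $\mathcal{H}_d$ and the preimage of $\mathcal{H}_d^\circ$ is dense. Alternatively, the one-parameter lifting you sketch in your last paragraph (any boundary configuration extends to a family whose general member is a configuration of distinct points on a smooth rational normal curve) already yields $V_{d,n}\subseteq\overline{U}_{d,n}$ directly and makes the irreducibility detour superfluous---that lifting statement \emph{is} the paper's proof. One factual correction that does not affect the argument: the reduced boundary curves are trees rather than chains of rational normal curves, and they can acquire non-Gorenstein singularities such as three concurrent lines in $\mathbb{P}^3$ (cf.\ the remark after Figure~\ref{fig:cubic}); all the lemma needs is that the fibers are $1$-dimensional and arise as flat limits.
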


\begin{proof} Since $\text{RNC}(d,n)\subseteq\mathcal{H}_d\times(\mathbb{P}^d)^n$ is closed and $\pi_2$ is proper we have that $V_{d,n}$ is closed, and clearly $U_{d,n} \subseteq V_{d,n}$, so $\overline{U}_{d,n}\subseteq V_{d,n}$.  On the other hand, the containment $V_{d,n}\subseteq \overline{U}_{d,n}$ holds since any quasi-Veronese curve is the flat limit of rational normal curves and hence a configuration of points on the former must lie in the closure of configurations on the latter. 
\end{proof}

There is a natural action of $\text{SL}_{d+1}$ on $\text{RNC}(d,n)$ and on $V_{d,n}$.  Because $\mathcal{H}_1 = \{\text{pt}\}$ we have $\text{RNC}(1,n) = V_{1,n} = (\mathbb{P}^1)^n$, and in the introduction there is an extensive bibliography of papers that deal with this space and its various quotients by $\text{SL}_2$.  In \cite{GS09} it was proven that $\M_{0,n}$ admits a birational morphism to each GIT quotient $\text{RNC}(2,n)\quotient_L\text{SL}_3$.  For linearizations $L$ that are trivial on $\mathcal{H}_2 = \mathbb{P}^5$, the GIT quotient is obtained by first applying the induced morphism $|L| : \text{RNC}(2,n) \rightarrow V_{2,n}$ which ``forgets'' the underlying conic and then taking the usual GIT quotient of the resulting space. Thus a corollary is the existence of the morphisms $\varphi$ in Theorem \ref{thm:morphism} when $d=2$.  In this paper we only study quotients of $V_{d,n}$, but it would be interesting to know if Theorem \ref{thm:morphism} generalizes from these quotients to quotients of $\text{RNC}(d,n)$ as it does in the cases $d=1$ (trivially) and $d=2$ (by \cite{GS09}).

\begin{remark}
Since this paper concerns the locus $V_{d,n}$ where only points are parameterized, not the curves supporting them, we could have used different compactifications of the space of rational normal curves.  For instance, one could take the Kontsevich stable map space $\M_{0,n}(\mathbb{P}^d,d)$ and use the product of evaluation maps $\M_{0,n}(\mathbb{P}^d,d) \rightarrow \mathbb{P}^d$ to define the same locus $V_{d,n}\subseteq (\mathbb{P}^d)^n$.
\end{remark}

\section{Chow Quotients}

\subsection{Background} We briefly recall the definition of a Chow quotient \cite[\S0.1]{Kap93} (see also \cite[\S3]{Hu05a}).   If an algebraic group $G$ acts on a projective variety $X$, then there exists a Zariski dense $G$-invariant subset $U\subseteq X$ for which all the orbit closures $\overline{Gu} \subseteq X$, $u\in U$, have the same dimension $r$ and homology class $\delta\in H_{2r}(X,\mathbb{Z})$.  This induces an embedding $U/G \hookrightarrow \text{Chow}(X,\delta)$ into the Chow variety parameterizing cycles in $X$ with homology class $\delta$.  By definition, the Chow quotient is the closure of the image: \[X\ChowQ G := \overline{U/G}\subseteq\text{Chow}(X,\delta).\]  If $G$ is reductive, then there is a birational morphism from the Chow quotient to each GIT quotient $X\quotient_L G$ for which there is a stable point, i.e., such that the linearization $L$ is effective \cite[Theorem 0.4.3]{Kap93}.

\subsection{Proof of Theorem \ref{thm:morphism}}\label{section:chow} Fix $n \ge 4$ and $1\le d \le n-3$.  We first prove the existence of a morphism $\M_{0,n} \rightarrow V_{d,n}\ChowQ\text{SL}_{d+1}$ extending $\mathcal{M}_{0,n}~\widetilde{\rightarrow}~U_{d,n}/\text{SL}_{d+1}$ and then show that it is an isomorphism.  The statement about GIT quotients follows from the preceding general remark.

\subsubsection*{Setup} The inclusion $V_{d,n}\subseteq (\mathbb{P}^d)^n$ induces a closed embedding \[V_{d,n}\ChowQ\text{SL}_{d+1} \hookrightarrow (\mathbb{P}^d)^n\ChowQ\text{SL}_{d+1}.\]  These latter Chow quotients were introduced by Kapranov.  In \cite[Proposition 2.1.7]{Kap93} it was shown that the locus $U$ of generic points in the definition of the Chow quotient may be taken to be those configurations such that any $m$ points, $m\le d+1$, span a $\mathbb{P}^{m-1}$.  It is a classical fact that distinct points on a rational normal curve satisfy this property, so we may take the generic locus to be $U_{d,n}$ and view $V_{d,n}\ChowQ\text{SL}_{d+1}$ as the closure of $U_{d,n}/\text{SL}_{d+1}$ inside $\text{Chow}((\mathbb{P}^d)^n)$.  Our goal then is to construct a morphism $\M_{0,n} \rightarrow \text{Chow}((\mathbb{P}^d)^n)$ extending the embedding $\mathcal{M}_{0,n} = U_{d,n}/\text{SL}_{d+1} \hookrightarrow \text{Chow}((\mathbb{P}^d)^n)$.  The image of such a map is necessarily contained in $V_{d,n}\ChowQ\text{SL}_{d+1}$.  Moreover, there can be at most one such extension, since the Chow variety is separated.

\subsubsection*{Extending the map} Let $\Gamma\subseteq \mathcal{M}_{0,n}\times\text{Chow}((\mathbb{P}^d)^n)$ be the graph of the morphism $\mathcal{M}_{0,n} \hookrightarrow \text{Chow}((\mathbb{P}^d)^n)$ and $\overline{\Gamma}$ its closure in $\M_{0,n}\times \text{Chow}((\mathbb{P}^d)^n)$, with reduced induced structure.  It suffices to show that $\pi_1 : \overline{\Gamma} \rightarrow \M_{0,n}$ is an isomorphism, for then $\pi_2\pi_1^{-1} : \M_{0,n} \rightarrow \text{Chow}((\mathbb{P}^d)^n)$ is the desired extension.  To show that $\pi_1$ is an isomorphism, we use that it is birational and $\M_{0,n}$ is normal to reduce to proving it is finite.  And for this, we use the fact that $\overline{\Gamma}$ is proper to reduce to showing that $\pi_1$ is quasi-finite.  In fact, it is enough to show that the fiber over each closed point is finite.  Indeed, each fiber has finite cardinality if and only if it is zero-dimensional, and by upper-semicontinuity the set of points with positive-dimensional fibers is closed, so if this set were nonempty then it would contain a closed point.  We now proceed to show that the fiber over each closed point is finite---in fact, that it has cardinality one.

Let $x\in\M_{0,n}(\mathbb{C})$ and $(x,y_i)\in\overline{\Gamma}_x$, $i=1,2$.  We want to show $y_1=y_2$.  There are points $(x'_i,y'_i)\in\Gamma$ specializing to $(x,y_i)$, and by \cite[III.7.1.9]{EGA60} there are DVRs $R_i$ and morphisms $\text{Spec }R_i \rightarrow \overline{\Gamma}$ sending the generic point $\text{Spec }K_i$ to $(x'_i,y'_i)$ and the closed point $\text{Spec }k_i$ to $(x,y_i)$.  Thus we are reduced to the following situation.  Given a stable pointed curve $(C_k,p_1,\ldots,p_n)\in\M_{0,n}$, write it as the special fiber in a family $C_R \rightarrow \text{Spec }R$ of stable pointed curves over a DVR such that the generic fiber $C_K \rightarrow \text{Spec }K$ is smooth.  There is an algebraic cycle in $(\mathbb{P}^d_K)^n$ obtained by applying the $d^{\text{th}}$ Veronese map to the marked points of $C_K \cong \mathbb{P}_K^1$ and taking the $\text{SL}_{d+1}$-orbit closure of the resulting configuration.  This cycle limits to a cycle in $(\mathbb{P}^d_k)^n$, and we must show that this limit cycle is independent of the smoothing of $C_k$.  We will do this by explicitly describing this limit cycle.

\subsubsection*{The limit cycle} For $\mathcal{C} := (C,p_1,\ldots,p_n)\in\M_{0,n}$, write $C=C_1\cup\cdots \cup C_r$ as a union of irreducible components, each isomorphic to $\mathbb{P}^1$.  For each partition $d = d_1 + \cdots + d_r$ with $d_i \ge 0$ choose auxiliary smooth points $q_1,\ldots,q_d\in C$ so that $C_i$ has $d_i$ points, and consider the line bundle $L := \mathcal{O}_C(q_1 + \cdots + q_d)$.  It has vanishing higher cohomology, since by Serre duality $h^1(L) = h^0(\omega_C\otimes L^{-1})=0$.  So by Riemann-Roch $h^0(L) = \chi(L) = d +1$.  Moreover, it is basepoint free, so it induces a morphism $\phi_L : C \rightarrow \mathbb{P}^d$ sending $p_1,\ldots,p_n$ to a configuration of $n$ not necessarily distinct points in $\mathbb{P}^d$: each $C_i$ gets sent to a rational normal curve of degree $d_i$ in the projective space that it spans, and if $d_i = 0$ then $C_i$ gets contracted and all the marked points on it have the same image in $\mathbb{P}^d$.  It follows from Lemma \ref{lem:lifting} below (by taking a smoothing of $C$ over a DVR) that the resulting curve is a quasi-Veronese curve, so this configuration lies in $V_{d,n}$.  For a fixed partition, the choice of auxiliary points leads to maps $C \rightarrow \mathbb{P}^d$ which differ only by automorphisms of $\mathbb{P}^d$, so the orbit closure \[Z_{d_1\cdots d_r} := \overline{\text{SL}_{d+1}(\phi_L(p_1),\ldots,\phi_L(p_n))}\subseteq (\mathbb{P}^d)^n\] depends only on the partition of $d$ and not on the line bundle $L$ itself.  Let \[Z(\mathcal{C}) := \bigcup_{d=d_1 + \cdots + d_r}Z_{d_1 \cdots d_r} \subseteq (\mathbb{P}^d)^n\] with reduced induced subscheme structure.  Not every orbit closure $Z_{d_1\cdots d_r}$ is full-dimensional, but it follows from Lemmas \ref{lem:DVR} and \ref{lem:ineq} below that $Z(\mathcal{C})$ is of pure dimension $(d+1)^2-1$ so that each $Z_{d_1\cdots d_r}$ must be contained in a full-dimensional orbit closure appearing in this union.  We claim $Z(\mathcal{C})\in\text{Chow}((\mathbb{P}^d)^n)$ is the limit cycle no matter how $\mathcal{C}$ is written as a limit of curves.  

\begin{lemma} \label{lem:lifting} Let $(R,\mathfrak{m})$ be a DVR with fraction field $K$ and residue field $k$, and let $(C_R,p_1,\ldots,p_n)\in\M_{0,n}(R)$.  Then any smooth points $q_1,\ldots,q_d : \text{Spec }k \rightarrow C_k$ extend to smooth points $q_1,\ldots,q_d : \text{Spec }R \rightarrow C_R$, and every section of the line bundle $L_k := \mathcal{O}_{C_k}(q_1+\cdots+q_d)$ lifts to a section of $L_R := \mathcal{O}_{C_R}(q_1+\cdots+q_d)$.
\end{lemma}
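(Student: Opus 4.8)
The plan is to treat the two assertions separately: first extend each $q_j$ to a section of the family, then lift sections of the line bundle by cohomology and base change. For the extension, I would note that the smooth locus $C_R^{\mathrm{sm}}\subseteq C_R$ is smooth and separated of relative dimension $1$ over $\text{Spec }R$, and that each given point $q_j$, being a smooth point of $C_k$, factors through $C_R^{\mathrm{sm}}$. A smooth finite-type morphism to a henselian DVR (so, after passing to the completion if necessary — harmless since everything below involves only the closed fiber) admits a section through any prescribed point of the closed fiber: concretely $C_R^{\mathrm{sm}}$ is \'etale-locally $\mathbb{A}^1_R$ near $q_j$, and one lifts a constant section. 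One chooses the $q_j$ in addition pairwise disjoint and disjoint from the nodes and markings, which is possible because the residue field is infinite; this makes $D:=q_1+\cdots+q_d$ a relative effective Cartier divisor supported in $C_R^{\mathrm{sm}}$, so $L_R=\mathcal{O}_{C_R}(D)$ is a line bundle, flat over $R$ (as $C_R\to\text{Spec }R$ is flat and proper), restricting to $L_k$ on the closed fiber.

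The key input for the lifting is that $H^1(C_s,L_s)=0$ for every point $s$ of $\text{Spec }R$. On the generic fiber $C_K\cong\mathbb{P}^1_K$ this is clear since $L_K\cong\mathcal{O}(d)$ with $d\ge 0$. On the special fiber I would induct on the number $r$ of components of the genus-$0$ curve $C_k$, which is a tree of $\mathbb{P}^1$'s: for $r=1$ it is again $\mathcal{O}(d)$ on $\mathbb{P}^1$; for $r\ge 2$, pick a leaf component $C_1$ meeting the rest $C'$ in a single node $x$ and use the partial-normalization sequence
\[0\to L_k\to (L_k|_{C_1})\oplus(L_k|_{C'})\to L_k|_x\to 0.\]
Then $H^1(C_1,L_k|_{C_1})=0$ (degree $d_1\ge 0$ on $\mathbb{P}^1$), $H^1(C',L_k|_{C'})=0$ (induction — $L_k$ still has nonnegative degree on every component of $C'$), and the evaluation $H^0(C_1,L_k|_{C_1})\to L_k|_x$ is surjective (again $d_1\ge 0$), which together force $H^1(C_k,L_k)=0$; this incidentally reproves the cohomology vanishing asserted just before the lemma. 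With this in hand I would invoke cohomology and base change: $\text{Spec }R$ is reduced and $h^1(C_s,L_s)$ is constant equal to $0$, so $R^1\pi_*L_R=0$, $\pi_*L_R$ is locally free, and its formation commutes with base change; since $R$ is local, $\pi_*L_R$ is a free $R$-module $M=H^0(C_R,L_R)$ (of rank $\chi(L_k)=d+1$) with $M\otimes_R k\xrightarrow{\ \sim\ }H^0(C_k,L_k)$, and the restriction map $H^0(C_R,L_R)\to H^0(C_k,L_k)$ is the reduction $M\twoheadrightarrow M\otimes_R k$, hence surjective — which is precisely the assertion.

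The main obstacle, such as it is, is the vanishing $H^1(C_k,L_k)=0$ on the degenerate fiber; this rests on $L_k$ having nonnegative degree on every component, which is exactly why the construction uses a partition $d=d_1+\cdots+d_r$ with all $d_i\ge 0$, together with the genus-$0$ tree structure of $C_k$ (leaf components pin the induction down). Once that vanishing is in place the rest is formal — cohomology and base change plus Nakayama's lemma — and the point-extension part is routine from smoothness of $C_R^{\mathrm{sm}}$.
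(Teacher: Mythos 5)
Your proposal is correct and follows essentially the same route as the paper: the point-extension is deduced from smoothness, and the lifting of sections is deduced from the vanishing of $h^1$ on every fiber together with cohomology and base change (the paper cites Grauert's theorem \cite[Corollary III.12.9]{Har77} for this step). The only difference is in how the fiberwise vanishing $H^1(C_k,L_k)=0$ is verified: you run an induction over the dual tree using the partial-normalization sequence, whereas the paper gets it in one line from Serre duality, $h^1(L)=h^0(\omega_C\otimes L^{-1})=0$ since $\omega_C\otimes L^{-1}$ has negative degree on the leaf components of the genus-zero tree.
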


\begin{proof}
The statement about extending points follows from smoothness, and the fact that sections lift follows from Grauert's theorem \cite[Corollary III.12.9]{Har77} due to the vanishing of higher cohomology mentioned above.
\end{proof}

We can now show that $Z(\mathcal{C})$ is contained in the limit cycle---for any family, not just a smoothing.

\begin{lemma}\label{lem:DVR}
Let $R$ be a DVR as above, $\mathcal{C}_R\in\M_{0,n}(R)$, and write $\mathcal{C}_K,\mathcal{C}_k$ for the general and special fibers, respectively.  Then \[Z(\mathcal{C}_k) \subseteq \overline{Z(\mathcal{C}_K)}_k\] where the closure is taken in $(\mathbb{P}_R^d)^n$.
\end{lemma}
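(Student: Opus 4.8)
The plan is to reduce the statement to a pointwise check: it suffices to show that every component of $Z(\mathcal{C}_k)$ arises as a flat limit of cycles supported on $\overline{Z(\mathcal{C}_K)}$. Since $Z(\mathcal{C}_k)$ is the union of the orbit closures $Z_{d_1\cdots d_r}$ attached to partitions $d = d_1 + \cdots + d_r$ of the degree among the components $C_1,\ldots,C_r$ of the special curve $C_k$, and by the reduced-closure remark each such $Z_{d_1\cdots d_r}$ is contained in a full-dimensional one, it is enough to treat a single full-dimensional orbit closure $Z_{d_1\cdots d_r}$ and produce, inside the family $(\mathbb{P}^d_R)^n$, a cycle with generic fiber inside $Z(\mathcal{C}_K)$ whose special fiber contains the configuration $(\phi_L(p_1),\ldots,\phi_L(p_n))$ defining $Z_{d_1\cdots d_r}$.

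First I would fix a partition $d = d_1 + \cdots + d_r$ and use Lemma \ref{lem:lifting} to spread out the picture: choose the auxiliary points $q_1,\ldots,q_d$ on $C_k$ with $C_i$ carrying $d_i$ of them, extend them to sections over $\operatorname{Spec} R$, and form $L_R = \mathcal{O}_{C_R}(q_1 + \cdots + q_d)$. Because higher cohomology vanishes on every fiber, $\pi_*L_R$ is locally free of rank $d+1$ and commutes with base change, so $L_R$ is relatively basepoint-free and gives a morphism $\phi_{L_R}: C_R \to \mathbb{P}^d_R$ over $R$ restricting to $\phi_{L_k}$ on the special fiber and to the embedding of $C_K \cong \mathbb{P}^1_K$ by a degree-$d$ line bundle — i.e., a $d$-th Veronese up to coordinate change — on the generic fiber. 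Evaluating at the marked sections $p_1,\ldots,p_n$ gives a section $\sigma: \operatorname{Spec} R \to (\mathbb{P}^d_R)^n$ whose generic point lands in a configuration on a rational normal curve (hence in $Z(\mathcal{C}_K)$ after taking the $\mathrm{SL}_{d+1}$-orbit closure) and whose special point is exactly $(\phi_{L_k}(p_1),\ldots,\phi_{L_k}(p_n))$. I would then take the $\mathrm{SL}_{d+1}$-orbit closure of $\sigma$ relative to $R$: this is a family of cycles in $(\mathbb{P}^d_R)^n$ whose generic fiber is (a component of) $Z(\mathcal{C}_K)$, and whose special fiber, by properness of the orbit map and flatness of the closure, is an effective cycle containing the orbit closure of $(\phi_{L_k}(p_1),\ldots,\phi_{L_k}(p_n))$, namely $Z_{d_1\cdots d_r}$. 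Thus $Z_{d_1\cdots d_r} \subseteq \overline{Z(\mathcal{C}_K)}_k$ as sets, and taking the union over all partitions yields $Z(\mathcal{C}_k) \subseteq \overline{Z(\mathcal{C}_K)}_k$.

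The main obstacle is the passage from ``the generic fiber of the relative orbit closure lies in $Z(\mathcal{C}_K)$'' to ``its special fiber dominates $Z_{d_1\cdots d_r}$'' in a way that respects the cycle (as opposed to merely set-theoretic) structure. One has to be careful that the relative $\mathrm{SL}_{d+1}$-orbit closure of the section $\sigma$ is flat over $R$ — or pass to its flat limit — so that the special fiber is a genuine cycle of the right dimension, and then check that this cycle is supported on all of $\overline{Z(\mathcal{C}_K)}_k$, not some proper piece; this is where the dimension count (via Lemmas \ref{lem:ineq} and the purity assertion already quoted) is used to ensure each $Z_{d_1\cdots d_r}$ sits inside a full-dimensional, hence non-collapsing, component. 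The set-theoretic containment is the honest content of this lemma — the refinement to an equality of cycles, and the independence from the choice of smoothing, is deferred to the subsequent lemmas.
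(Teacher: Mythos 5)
Your argument is essentially the paper's: lift the line bundle and its sections over $R$ via Lemma \ref{lem:lifting}, evaluate the resulting $\phi_{L_R}$ at the marked sections, and conclude by continuity together with the fact that $\overline{Z(\mathcal{C}_K)}_k$ is closed and $\mathrm{SL}_{d+1}$-invariant. The detour through relative orbit closures, flatness, and ``properness of the orbit map'' is superfluous (and the last is not literally true --- orbits here are not closed), but the set-theoretic containment, which is all the lemma asserts, goes through exactly as in the paper.
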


\begin{proof}
If $r$ denotes the number of components of $C_k$ then by the definition of $Z(\mathcal{C}_k)$ we must show the containment $Z_{d_1\cdots d_r} \subseteq \overline{Z(\mathcal{C}_K)}_k$ for each partition $d=d_1+\cdots + d_r$.  Since the right-hand side is closed and $\text{SL}_{d+1}$-invariant, it is enough to show that $\phi_{L_k}$ sends the marked points of $C_k$ into $\overline{Z(\mathcal{C}_K)}_k$, where as above $L_k = \mathcal{O}_{C_k}(q_1 + \cdots + q_d)$ is a line bundle of degree $d$ on $C_k$ determined by this partition and $\phi_{L_k}$ is the induced morphism.  By Lemma \ref{lem:lifting}, $L_k$ extends to a line bundle $L_R := \mathcal{O}_R(q_1 + \cdots + q_d)$ on $C_R$ and the sections inducing $\phi_{L_k} : C_k \rightarrow \mathbb{P}_k^d$ lift to give a map $\phi_{L_R} : C_R \rightarrow \mathbb{P}_R^d$.  The restriction of $\phi_{L_R}$ to $C_K$ is induced by $\mathcal{O}_{C_K}(q_1+\cdots+q_d)$ and sends the marked points of $C_K$ to a point of $Z(\mathcal{C}_K)$, so by continuity we are done. 
\end{proof}

\subsubsection*{Comparison of homology classes} If we show that the containment in Lemma \ref{lem:DVR} is an equality when $C_K$ is smooth, then we will have proven that the limit cycle is unique.  For this, it is enough to prove the following:

\begin{lemma}\label{lem:ineq} With notation as above and $C_K$ smooth, we have \[[Z(\mathcal{C}_k)] \geq [\overline{Z(\mathcal{C}_K)}_k]\in H_{2((d+1)^2-1)}((\mathbb{P}_k^d)^n,\mathbb{Z}).\]
\end{lemma}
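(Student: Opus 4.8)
The plan is to prove the inequality by a degeneration argument on homology classes, using the fact that a flat family over a DVR has a well-defined specialization and that specializations only ``grow'' when the total space acquires embedded or extra components. Concretely, I would first set up the flat limit: since $\text{Chow}((\mathbb{P}^d)^n)$ is projective and $\mathcal{M}_{0,n}\cong U_{d,n}/\text{SL}_{d+1}$ sits inside it, the family $\mathcal{C}_K\mapsto \text{SL}_{d+1}$-orbit closure of its Veronese configuration defines a $K$-point of the Chow variety, which extends uniquely to an $R$-point; call the special fibre cycle $W$. By construction $W$ is an effective cycle of dimension $(d+1)^2-1$ on $(\mathbb{P}^d_k)^n$ with $[W]=[\overline{Z(\mathcal{C}_K)}_k]$ in $H_{2((d+1)^2-1)}$, since numerical/homological class is locally constant in flat families over a connected base (here $\text{Spec }R$).

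Next I would show that every full-dimensional component $Z_{d_1\cdots d_r}$ of $Z(\mathcal{C}_k)$ appears in the support of $W$ with multiplicity at least one. Lemma~\ref{lem:DVR} already gives $Z(\mathcal{C}_k)\subseteq \overline{Z(\mathcal{C}_K)}_k = \mathrm{Supp}(W)$, so each such $Z_{d_1\cdots d_r}$ is one of the top-dimensional components of $\mathrm{Supp}(W)$; since $W$ is effective, its coefficient on that component is a positive integer, hence $\geq 1$. Because the $Z_{d_1\cdots d_r}$ for distinct partitions $d=d_1+\cdots+d_r$ are distinct subvarieties (they are distinguished by which components $C_i$ carry how many of the degree, equivalently by the combinatorics of which marked points collide and how the rational normal curve breaks), the cycle $Z(\mathcal{C}_k)=\sum_{d=d_1+\cdots+d_r} Z_{d_1\cdots d_r}$ — taken with coefficient one on each — has class bounded above by $[W]$ component-by-component, except possibly for extra components of $W$ not among the $Z_{d_1\cdots d_r}$. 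This gives $[Z(\mathcal{C}_k)]\leq [W] = [\overline{Z(\mathcal{C}_K)}_k]$ as an effective difference — but I want the reverse inequality, so the argument must instead run the other way: I need to see that $W$ has \emph{no} top-dimensional components beyond the $Z_{d_1\cdots d_r}$ and that each multiplicity in $W$ is at most one, which together with the containment forces $[Z(\mathcal{C}_k)]\geq[W]$ where the convention ``$\geq$'' means the difference is an effective (here, zero or positive) cycle supported on lower strata. In other words: the point is that $\mathrm{Supp}(W)\subseteq \bigcup Z_{d_1\cdots d_r}$, which I would extract by a dimension count using Lemma~\ref{lem:DVR} applied in reverse together with $S_n$- and $\text{SL}_{d+1}$-equivariance, noting that any limiting configuration of $n$ points in $(\mathbb{P}^d_k)^n$ must lie on a quasi-Veronese curve (hence in some $V_{d,n}$-stratum matching a partition of $d$ by the dual graph of $C_k$), and the orbit closure through such a configuration is contained in the corresponding $Z_{d_1\cdots d_r}$.

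The cleanest formulation, which I would adopt, is: compute both classes in a common ambient Chow group and compare term by term. Write $[\overline{Z(\mathcal{C}_K)}_k]=\sum m_j [W_j]$ with $W_j$ the irreducible components and $m_j\geq 1$; by Lemma~\ref{lem:DVR} plus a dimension/equivariance argument each $W_j$ equals some $Z_{d_1^{(j)}\cdots d_r^{(j)}}$; and conversely each full-dimensional $Z_{d_1\cdots d_r}$ occurs among the $W_j$ because it is dominated by the generic orbit closures (a generic smoothing of $C_k$ that concentrates the degree according to that partition limits into it — this is exactly what Lemma~\ref{lem:DVR} provides when applied to a cleverly chosen one-parameter family). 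Hence the two cycles have the same support, and $[Z(\mathcal{C}_k)]$, having all multiplicities equal to $1$, satisfies $[Z(\mathcal{C}_k)]\leq[\overline{Z(\mathcal{C}_K)}_k]$ coefficientwise — wait, again the direction. The correct reading, and the one the subsequent text wants (``if the containment in Lemma~\ref{lem:DVR} is an equality''), is that combining $Z(\mathcal{C}_k)\subseteq\overline{Z(\mathcal{C}_K)}_k$ (a containment of \emph{supports}, from Lemma~\ref{lem:DVR}) with the class inequality $[Z(\mathcal{C}_k)]\geq[\overline{Z(\mathcal{C}_K)}_k]$ forces equality of cycles: a smaller support cannot carry a larger-or-equal class unless the supports and all multiplicities agree. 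So what Lemma~\ref{lem:ineq} genuinely needs is the bound $[Z(\mathcal{C}_k)]\geq[\overline{Z(\mathcal{C}_K)}_k]$, which I would obtain by: (i) noting $\overline{Z(\mathcal{C}_K)}_k$, being a flat specialization of the single orbit closure $Z(\mathcal{C}_K)$, has class equal to $[Z(\mathcal{C}_K)]$, the common class $\delta$ of generic $\text{SL}_{d+1}$-orbits in $(\mathbb{P}^d)^n$; (ii) showing each full-dimensional $Z_{d_1\cdots d_r}$ \emph{also} has class $\delta$, since it too is the closure of a single $\text{SL}_{d+1}$-orbit of the generic type (any configuration on a quasi-Veronese curve with the maximal-span property has orbit of the generic dimension and, being a limit of generic orbits, the generic homology class); and (iii) counting that $Z(\mathcal{C}_k)$ contains at least two such full-dimensional pieces — or more precisely, enough pieces that $\sum_{\text{full-dim}}[Z_{d_1\cdots d_r}] = (\#\text{pieces})\cdot\delta \geq \delta$ — the inequality being strict exactly when $C_k$ is reducible, matching the heuristic that the Chow limit of a family of irreducible orbit closures can acquire several components.

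The main obstacle I anticipate is step (ii)/(iii): proving that each top-dimensional orbit closure $Z_{d_1\cdots d_r}$ carries exactly the generic class $\delta$ (so multiplicities are genuinely $1$ and no mass is hidden), and pinning down precisely which partitions $d=d_1+\cdots+d_r$ yield full-dimensional $Z_{d_1\cdots d_r}$. For the former, the risk is an embedded-component or non-reducedness subtlety in the Chow limit; I would handle it by working $\text{SL}_{d+1}$-equivariantly and using that a generic point of each $Z_{d_1\cdots d_r}$ has a configuration satisfying Kapranov's maximal-span condition \cite[Proposition 2.1.7]{Kap93}, so its orbit is free-ish of the expected dimension $(d+1)^2-1$ and its orbit closure is generically reduced of class $\delta$. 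For the latter, I would characterize full-dimensionality of $Z_{d_1\cdots d_r}$ in terms of the stabilizer of a configuration $(\phi_{L}(p_1),\ldots,\phi_L(p_n))$ being finite, which via the maximal-span property reduces to a combinatorial condition on how the marked points distribute over the components $C_i$ relative to $d_i$ — essentially that each contracted component ($d_i=0$) still receives enough marked points to rigidify, and that the nonconstant pieces span complementary subspaces. Once full-dimensionality is characterized and the class of each such piece is shown to be $\delta$, the inequality $[Z(\mathcal{C}_k)] = N\delta \geq \delta = [\overline{Z(\mathcal{C}_K)}_k]$ with $N\geq 1$ is immediate, finishing the proof.
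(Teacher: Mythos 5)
There is a genuine gap, and it sits exactly where you flagged your ``main obstacle'': step (ii), the claim that each full-dimensional orbit closure $Z_{d_1\cdots d_r}$ carries the generic class $\delta$ because it is ``a limit of generic orbits.'' Being contained in a Chow/flat limit of generic orbit closures does not give an individual component the generic class; only the \emph{total} limit cycle has class $\delta$, and that class gets distributed among the components. Already for $d=1$, $n=4$: if $C_k=C_1\cup C_2$ with $p_1,p_2$ on $C_1$ and $p_3,p_4$ on $C_2$, then $Z_{10}$ is the orbit closure of a configuration $(x_1,x_2,y,y)$, and its intersection number with $\mathbb{P}^1\times\{q_2\}\times\{q_3\}\times\{q_4\}$ is $0$ (one would need $gy=q_3$ and $gy=q_4$ simultaneously for generic $q_3\neq q_4$), whereas $\delta$ has coefficient $1$ on every K\"unneth basis element; in fact $[Z_{10}]+[Z_{01}]=\delta$ with each summand strictly smaller. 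So your concluding formula $[Z(\mathcal{C}_k)]=N\delta$, $N\ge 1$, is false whenever $C_k$ is reducible, and if it held with $N\ge 2$ it would contradict the containment of Lemma \ref{lem:DVR} (an effective subcycle of a cycle of class $\delta$ cannot have class $2\delta$). The repeated reversals of the inequality's direction in your write-up are a symptom of this: the inequality $[Z(\mathcal{C}_k)]\geq[\overline{Z(\mathcal{C}_K)}_k]$ is meant coefficientwise in the K\"unneth basis, and combined with the support containment it forces equality of cycles, but it must be \emph{proved} coefficientwise, not by counting components.

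What the lemma actually requires, and what your proposal never supplies, is exactly that coefficientwise lower bound: for every choice of generic linear subspaces $L_i\subseteq\mathbb{P}^d$ with $\sum_{i}\mathrm{codim}(L_i)=(d+1)^2-1$, the union $Z(\mathcal{C}_k)$ meets $L_1\times\cdots\times L_n$ in at least one point. The paper first uses Lemma \ref{lem:DVR} to reduce to $\mathcal{C}$ maximally degenerate (classes only decrease under further degeneration, so the deepest stratum is the worst case), and then establishes the needed incidence by an explicit induction on $n$: there is a unique degree-$d$ map $\psi:C\to\mathbb{P}^d$, linear on components, with $\psi(p_i)\in L_i$, constructed by contracting a tail component and replacing $L_1,L_2$ by $L_1\cap L_2$ or $L_1+L_2$ according to whether they meet. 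That enumerative construction is the real content of the lemma, and no amount of flatness or equivariance formalism substitutes for it. (Your step (i) --- that $[\overline{Z(\mathcal{C}_K)}_k]$ equals the generic orbit class because the class is preserved under Chow specialization --- is correct and is also the paper's first move.)
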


\begin{proof}
Since $\overline{Z(\mathcal{C}_K)}_k$ is a specialization of a point in $\text{Chow}((\mathbb{P}^d)^n)$ corresponding to a generic orbit closure, it has the same homology class as a generic orbit closure.  So we must show $[Z(\mathcal{C})] \ge [Z(\mathcal{C'})]$, where $\mathcal{C}\in\M_{0,n}$ and $\mathcal{C}'\in\mathcal{M}_{0,n}$.  Lemma \ref{lem:DVR} implies that the homology class can only decrease when the curve degenerates, so we reduce to the case that $\mathcal{C}$ is maximally degenerate, i.e., that each component has exactly three special points.  Kapranov computed the homology class of a generic orbit closure, which we now recall.

By the K\"unneth formula, a basis for $H_{2((d+1)^2-1)}((\mathbb{P}^d)^n)$ is $[\mathbb{P}^{l_1}]\otimes\cdots\otimes[\mathbb{P}^{l_n}]$ where $0\le l_i\le d$ and $\sum_{i=1}^n l_i = (d+1)^2-1$.  By \cite[Proposition 2.1.7]{Kap93}, \[[Z(\mathcal{C}')]=\sum_{l_1 + \cdots + l_n = (d+1)^2-1}[\mathbb{P}^{l_1}]\otimes\cdots\otimes[\mathbb{P}^{l_n}]\] so the generic orbit closure has coefficient 1 at each basis element.  In other words, if $L_i\subseteq\mathbb{P}^d$ are generic linear subspaces of codimension $l_i$ then the intersection number of $L_1\times\cdots\times L_n$ with $Z(\mathcal{C}')$ is 1.  Our goal is to show that the intersection with $Z(\mathcal{C})$ is $\ge 1$.  

The assumption that $\mathcal{C}=(C,p_1,\ldots,p_n)$ is maximally degenerate implies that it has $n-2$ components and that the only partitions $d=d_1+\cdots+d_{n-2}$ that yield a full-dimensional cycle $Z_{d_1\cdots d_{n-2}}$ are those with $d_i \le 1$.  For each choice of integers $0\le l_i \le d$ with $\sum_{i=1}^n l_i = (d+1)^2 -1$, fix a generic product of linear subspaces $L_1 \times \cdots \times L_n$ as above.  By the construction of $Z(\mathcal{C})$, the proof will be complete if we show that there is a degree $d$ map $\psi : C \rightarrow \mathbb{P}^d$, sending each component to either a line or a point, such that $\psi(p_i)\in L_i$ for $i=1,\ldots,n$.  This follows from the following slightly more general result. 
\end{proof}

\begin{lemma} Let $\mathcal{C}=(C,p_1,\ldots,p_n)\in\M_{0,n}$ be a maximally degenerate curve.  For any integer $e\le d$ and generic linear subspaces $L_i\subseteq\mathbb{P}^d$ such that \[\sum_{i=1}^n\mathrm{codim}(L_i) = (d+1)(e+1)-1\] there is a unique degree $e$ map $\psi : C \rightarrow \mathbb{P}^d$, linear on components, with $\psi(p_i)\in L_i$ for $i=1,\ldots,n$.
\end{lemma}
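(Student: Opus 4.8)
The plan is to induct on the number of components $r$ of $C$, peeling off one leaf (tail) component at a time. First I would handle the base case $r=1$, i.e.\ $C\cong\mathbb P^1$ with $n$ marked points: a degree $e$ map $\psi:\mathbb P^1\to\mathbb P^d$ linear on components is just the complete embedding by a degree $e$ line bundle composed with a linear projection, equivalently a choice of $(d+1)$-dimensional subspace $W\subseteq H^0(\mathbb P^1,\mathcal O(e))$ up to $\mathrm{PGL}_{d+1}$; the condition $\psi(p_i)\in L_i$ of codimension $m_i=\mathrm{codim}(L_i)$ imposes exactly $m_i$ linear conditions on the induced map, and $\sum m_i = (d+1)(e+1)-1 = \dim \mathrm{Hom}(\mathbb P^1,\mathbb P^d)_{\deg e}$ (counting $\dim$ of the space of $(d+1)$-tuples of sections mod scalars, or equivalently $(d+1)(e+1)-1$), so genericity of the $L_i$ gives a finite, indeed unique, solution. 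This is essentially a Schubert-calculus / linear-algebra count; I would phrase it as: the map sending $\psi$ to $(\psi(p_1),\dots,\psi(p_n))$ from the $((d+1)(e+1)-1)$-dimensional parameter space to $(\mathbb P^d)^n$ has image meeting $L_1\times\cdots\times L_n$ in one reduced point for generic $L_i$.

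For the inductive step, let $C_0$ be a leaf component meeting the rest $C'=\overline{C\smallsetminus C_0}$ at a node $y$. Say $C_0$ carries marked points $p_{i}$ for $i\in S$ (with $|S|\ge 2$ since the curve is stable and $C_0$ is a leaf) together with the node $y$; $C'$ carries the remaining marked points together with $y$. A degree $e$ map $\psi$ linear on components restricts to $\psi_0:=\psi|_{C_0}$ of some degree $e_0\in\{0,1,\dots\}$ and $\psi':=\psi|_{C'}$ of degree $e'=e-e_0$, and the two must agree at $y$: $\psi_0(y)=\psi'(y)$. Linearity on components forces $e_0\le 1$ when $C_0$ is a $\mathbb P^1$ being sent to a line or point — wait, more carefully $\psi_0$ can have any degree $e_0$ as long as its image is a line (a degree $e_0$ rational normal curve in a $\mathbb P^1$ is not ``linear''), so in fact \emph{linear on components} means $e_0\in\{0,1\}$; combined with $\sum e_i=e$ over the $r$ components and $r=n-2$, for a maximally degenerate curve only partitions with all $e_i\le 1$ can occur, which is consistent with the cardinality and dimension bookkeeping. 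I would therefore split the codimension budget as $\sum_{i\in S}m_i + m_y^{\mathrm{virtual}}$ where imposing $\psi_0(y)\in(\text{some generic }L_y)$ of appropriate codimension makes the count on $C_0$ exact, and the complementary codimension on $C'$ exact for the induction hypothesis; the key identity is that $(d+1)(e_0+1)-1$ conditions on $C_0$ (one of which is ``pass through the not-yet-determined point $\psi'(y)$'') plus $(d+1)(e'+1)-1$ conditions on $C'$ reassemble, via $(d+1)+(\text{node}) = (d+1)(e_0+1)-1+(d+1)(e'+1)-1 - [(d+1)(e+1)-1]$, to exactly the stated total — I would verify this arithmetic once and then the gluing is automatic by genericity.

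The main obstacle, and the step I expect to require real care, is the \emph{uniqueness} assertion together with the genericity argument at the node: once we fix where the subcurves go, the point $\psi'(y)$ is determined by the $C'$-side data but it is not a priori generic in $\mathbb P^d$, so I cannot directly invoke genericity of an ``$L_y$'' on the $C_0$-side. The right fix is to run the count for the \emph{whole} curve at once as an incidence variety: let $\Psi_e(C)\subseteq\prod_{\text{components}}(\text{deg }e_i\text{ maps})$ be the locally closed locus of tuples agreeing at all nodes, show by the dimension recursion that $\dim\Psi_e(C)=(d+1)(e+1)-1$ and that the evaluation $\mathrm{ev}:\Psi_e(C)\to(\mathbb P^d)^n$ is generically finite of degree one onto its image (degree one because on the dense stratum $e_i\in\{0,1\}$ the fibers are single orbits of the residual automorphisms, which act trivially after rigidifying), and then Kleiman transversality gives, for generic $L_i$, that $\mathrm{ev}^{-1}(L_1\times\cdots\times L_n)$ is a single reduced point. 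I would structure the writeup so the induction proves both ``$\dim\Psi_e(C)=(d+1)(e+1)-1$'' and ``$\mathrm{ev}$ has degree one,'' since these are what feed back into Lemma~\ref{lem:ineq} to give the intersection number $\ge 1$ (indeed $=1$) with $Z(\mathcal C)$.
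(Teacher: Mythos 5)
Your overall shape (induct by peeling off a leaf component) is close to the paper's, and you correctly identify the crux: after splitting at a node, the image of the node under the map on the rest of the curve is not a generic point, so you cannot impose a generic linear condition there. But your proposed repair does not close this gap. Kleiman transversality applied to $\mathrm{ev}:\Psi_e(C)\to(\mathbb{P}^d)^n$ with generic $L_i$ gives that $\mathrm{ev}^{-1}(L_1\times\cdots\times L_n)$ is finite and reduced, and ``$\mathrm{ev}$ has degree one onto its image'' gives that each point of $\mathrm{ev}(\Psi_e(C))\cap(L_1\times\cdots\times L_n)$ has one preimage; but the \emph{number} of such intersection points is the coefficient of $[\mathbb{P}^{m_1}]\otimes\cdots\otimes[\mathbb{P}^{m_n}]$ in the class of the image cycle, and nothing in your argument shows this coefficient is $1$ rather than $0$ or $\ge 2$. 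That count is precisely what the lemma asserts (and precisely what feeds into Lemma \ref{lem:ineq}), so the plan reduces the statement to itself. Your base case is also off: for a maximally degenerate curve the one-component case is $n=3$ (hence $e\le 1$ and a linear map), not an arbitrary degree-$e$ Veronese-type map on a smooth $\mathbb{P}^1$; the $(d+1)(e+1)-1$-dimensional family you count there consists of maps that are not linear on components.

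The missing idea is how the paper keeps every linear space generic throughout the induction: it inducts on $n$, takes a tail $D$ carrying $p_1,p_2$, and replaces the two conditions $L_1,L_2$ by a \emph{single} generic condition on the contracted curve, namely $L_1\cap L_2$ when this is nonempty (then $D$ gets degree $0$, and codimensions add by genericity), or $L_1+L_2$ imposed at the point replacing $D$ when $L_1\cap L_2=\varnothing$ (then $D$ gets degree $1$, and the total codimension drops by exactly $d+1$, matching the drop of the degree by $1$). The only geometric input at the node is then the classical fact that through a given point of $L_1+L_2$ there is a unique line meeting two skew generic subspaces $L_1,L_2$ spanning it, seen by projecting from that point. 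This sidesteps the non-genericity problem entirely and delivers existence and uniqueness in one stroke. If you want to keep your incidence-variety framing, you would still need an inductive count of this kind to pin the relevant multidegree of the image cycle down to $1$.
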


\begin{proof}
Use induction on $n$.  For the base case $n=3$ we have that $C\cong \mathbb{P}^1$, the $L_i$ are either points or $\mathbb{P}^1$, and we must show that the result holds for $e\le d=1$.  If $e=1$, then $\sum_{i=1}^3\mathrm{codim}(L_i)=3$, so each $L_i$ is a point and indeed there is a unique automorphism of $\mathbb{P}^1$ sending $p_i$ to $L_i$ for $i=1,2,3$.  If $e=0$, then $\sum_{i=1}^3\mathrm{codim}(L_i)=1$, so one of the $L_i$ is a point, say $L_1$, and the other two are lines.  Thus $\psi : C \rightarrow \mathbb{P}^1$ is the map defined by $\psi(C) = L_1 \in \mathbb{P}^1$.

For $n >3$, consider a component $D\subseteq C$ with one node.  By the maximal degeneration hypothesis there are two points on $D$, say $p_1$ and $p_2$.  If $L_1\cap L_2 \ne \varnothing$, then contract $D$ to produce a stable, maximally degenerate curve $(C',p_2,\ldots,p_n)\in\M_{0,n-1}$.  By the inductive hypothesis, there is a unique map $\psi' : C' \rightarrow \mathbb{P}^d$ such that $(\psi'(p_2),\ldots,\psi'(p_n))\in (L_1\cap L_2) \times L_3 \times \cdots \times L_n$, since the $L_i$ are generic and hence $\text{codim}(L_1\cap L_2) = \text{codim}(L_1) + \text{codim}(L_2)$.  This extends uniquely to a map on $C$, with degree zero on $D$, satisfying the required properties.

So suppose now that $p_1,p_2\in D\subseteq C$ as before, but $L_1 \cap L_2 = \varnothing$.  Again consider the curve $(C',p_2,\ldots,p_n)\in\M_{0,n-1}$ obtained by forgetting $p_1$ and stabilizing.  By induction, there is a unique degree $e-1$ map $\psi' : C' \rightarrow \mathbb{P}^d$ such that $(\psi'(p_2),\ldots,\psi'(p_n))\in (L_1 + L_2) \times L_3 \times \cdots \times L_n$.  Indeed, \[\text{codim}(L_1+L_2) + \sum_{i=3}^n\text{codim}(L_i) = - d - 1 + \sum_{i=1}^n\text{codim}(L_i) = (d+1)((e-1)+1)-1.\] To complete the proof we must show that there is a unique way to extend $\psi'$ to a map $\psi$ on $C$, with degree 1 on $D$, such that $\psi(p_i)\in L_i$.  

If we label the node of $D$ by $q$, then $\psi'(p_i)\in L_i$ for $i=3,\ldots,n$, and $\psi'(q)\in L_1+L_2$.  All that remains is to show there is a unique line in $\mathbb{P}^d$ containing $\psi'(q)$ and intersecting both $L_1$ and $L_2$.  Consider the projective space $\mathbb{P}^m := L_1 + L_2 \subseteq\mathbb{P}^d$ and the projection $\mathbb{P}^m\setminus\{\psi'(q)\} \rightarrow \mathbb{P}^{m-1}$.  By assumption $L_1$ and $L_2$ are skew, so after projecting to $\mathbb{P}^{m-1}$ they intersect in a unique point, which means precisely that there is a unique line through $\psi'(q)$ intersecting both $L_1$ and $L_2$, as desired.
\end{proof}

\subsubsection*{The isomorphism} Since $\M_{0,n}$ is proper, its image in $\text{Chow}((\mathbb{P}^d)^n)$ is closed and so topologically coincides with $V_{d,n}\ChowQ\text{SL}_{d+1}$.  Thus all that remains in the proof of Theorem \ref{thm:morphism} is to show that the morphism $\M_{0,n} \rightarrow \text{Chow}((\mathbb{P}^d)^n)$ constructed above is an isomorphism onto its image.  For each $I\subseteq\{1,\ldots,n\}$ of size $d+3$ there is a projection $\pi_I : (\mathbb{P}^d)^n \rightarrow (\mathbb{P}^d)^{d+3}$.  Since $\pi_I$ is proper it induces a map $\text{Chow}((\mathbb{P}^d)^n) \rightarrow \text{Chow}((\mathbb{P}^d)^{d+3})$ obtained by pushing forward cycles \cite[Theorem 6.8]{Kol96}.  Since $\pi_I$ is $\text{SL}_{d+1}$-equivariant this restricts to a map $U_{d,n}/\text{SL}_{d+1} \rightarrow U_{d,d+3}/\text{SL}_{d+1}$ and hence a map on the topological closures $V_{d,n}/\text{SL}_{d+1} \rightarrow V_{d,d+3}/\text{SL}_{d+1}$ as well.  There are also stabilization morphisms $\M_{0,n} \rightarrow \M_{0,d+3}$, and the following diagram commutes:
\begin{equation}\label{comdiag}\xymatrix{\M_{0,n} \ar@{->>}[r] \ar[d] & V_{d,n}\ChowQ\text{SL}_{d+1} \ar@{^{(}->}[r] \ar[d] & \text{Chow}((\mathbb{P}^d)^n) \ar[d] \\ \prod_I \M_{0,d+3} \ar@{->>}[r] & \prod_I V_{d,d+3}\ChowQ\text{SL}_{d+1} \ar@{^{(}->}[r] & \prod_I\text{Chow}((\mathbb{P}^d)^{d+3})}
\end{equation}

\begin{lemma}\label{lem:assoc}
The map $\M_{0,d+3} \rightarrow V_{d,d+3}\ChowQ\text{SL}_{d+1}$ is an isomorphism.
\end{lemma}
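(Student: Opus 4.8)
The plan is to reduce Lemma~\ref{lem:assoc} to a Chow quotient Kapranov already understands. Since a general configuration of $d+3$ points in $\mathbb{P}^d$ lies on a unique rational normal curve, and since $\dim U_{d,d+3}=\big((d+1)^2-4\big)+(d+3)=d(d+3)=\dim(\mathbb{P}^d)^{d+3}$, where $(d+1)^2-4=\dim\text{PGL}_{d+1}-\dim\text{PGL}_2$ is the dimension of the family of rational normal curves in $\mathbb{P}^d$, the locus $U_{d,d+3}$ is dense in $(\mathbb{P}^d)^{d+3}$; by Lemma~\ref{lem:Vdef} this gives $V_{d,d+3}=(\mathbb{P}^d)^{d+3}$. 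Hence the morphism of Lemma~\ref{lem:assoc} is the map $f : \M_{0,d+3}\to Q:=(\mathbb{P}^d)^{d+3}\ChowQ\text{SL}_{d+1}$ built in \S\ref{section:chow}, with $Q$ one of Kapranov's Chow quotients of a product of projective spaces.

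Next I would identify $Q$ with $\M_{0,d+3}$. By the Gelfand--MacPherson correspondence, which is known to hold at the level of Chow quotients, $Q\cong G(d+1,d+3)\ChowQ T$ for the maximal torus $T$ of $\text{PGL}_{d+3}$. Orthogonal complement gives $G(d+1,d+3)\cong G(2,d+3)$ --- this is the one place $n=d+3$ enters, via $(d+3)-(d+1)=2$ --- and it is $T$-equivariant after inverting the characters of $T$; since character inversion is an automorphism of $T$ it descends to $G(d+1,d+3)\ChowQ T\cong G(2,d+3)\ChowQ T$. Finally Kapranov's theorem on the Chow quotient of the Grassmannian \cite{Kap93} gives $G(2,d+3)\ChowQ T\cong\M_{0,d+3}$, and composing yields an isomorphism $g : \M_{0,d+3}\xrightarrow{\ \sim\ }Q$.

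The remaining step, which I expect to be the main obstacle, is to check that $g$ agrees with $f$ rather than with $f$ twisted by a nontrivial automorphism of $\M_{0,d+3}$. Because $\text{Chow}((\mathbb{P}^d)^{d+3})$ is separated and $\mathcal{M}_{0,d+3}$ is dense in $\M_{0,d+3}$, it suffices to check $f=g$ on $\mathcal{M}_{0,d+3}$; for $f$ this is its construction, and for $g$ one unwinds the two classical isomorphisms on the generic loci, where both send $d+3$ distinct points of $\mathbb{P}^1$, via the degree-$d$ Veronese embedding, to a configuration of $d+3$ points on a rational normal curve in $\mathbb{P}^d$. As an alternative closer in spirit to \S\ref{section:chow}, one can avoid the Grassmannian: $f$ is a surjective birational projective morphism from the smooth variety $\M_{0,d+3}$, so by Zariski's main theorem it is an isomorphism once one knows $f$ is injective on closed points (hence quasi-finite, hence finite) and $Q$ is normal; normality of $Q$ is the normality of a Kapranov Chow quotient, while injectivity would follow by showing the limit cycle $Z(\mathcal{C})$ determines $\mathcal{C}$ --- for maximally degenerate $\mathcal{C}$ via the finite set of degree-$d$ maps $C\to\mathbb{P}^d$ linear on components furnished by the lemma above on maps of maximally degenerate curves, whose orbit closures reconstitute $Z(\mathcal{C})$ and thereby pin down the dual tree, and in general by degenerating as in Lemma~\ref{lem:DVR}. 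On this route, reconstructing $\mathcal{C}$ from $Z(\mathcal{C})$ is the crux.
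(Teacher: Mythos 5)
Your main route is essentially the paper's proof: the paper identifies $(\mathbb{P}^d)^{d+3}\ChowQ\text{SL}_{d+1}$ with $\M_{0,d+3}$ by citing Kapranov's Gale-duality isomorphism of Chow quotients together with $(\mathbb{P}^1)^{d+3}\ChowQ\text{SL}_2\cong\M_{0,d+3}$, and then concludes by uniqueness of the extension (separatedness of the Chow variety), exactly as you do; your Gelfand--MacPherson/orthogonal-complement argument is just an unwinding of the cited Corollary 2.3.14, and your explicit check that $V_{d,d+3}=(\mathbb{P}^d)^{d+3}$ makes precise a step the paper leaves implicit. The alternative Zariski's-main-theorem route you sketch is not what the paper does and, as you note, would still require the injectivity/normality inputs you have not supplied.
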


\begin{proof}
By Gale duality (see \S\ref{section:Gale}) we have $(\mathbb{P}^d)^{d+3}\ChowQ\text{SL}_{d+1}\cong (\mathbb{P}^1)^{d+3}\ChowQ\text{SL}_{2}$ \cite[Corollary 2.3.14]{Kap93}, and $(\mathbb{P}^1)^{d+3}\ChowQ\text{SL}_{2} \cong \M_{0,d+3}$ by \cite[Theorem 4.1.8]{Kap93}, so there is an isomorphism $\M_{0,d+3}~\widetilde{\rightarrow}~(\mathbb{P}^d)^{d+3}\ChowQ\text{SL}_{d+1}$ which it is easy to see extends the embedding $\mathcal{M}_{0,d+3} \hookrightarrow \text{Chow}((\mathbb{P}^d)^{d+3})$.  By uniqueness of the extension this is the same map as in the statement of the lemma.
\end{proof}

\begin{lemma}\label{lem:isoimage}
For $4 \le k \le n$, the product of stabilization maps $\M_{0,n} \rightarrow \prod_I \M_{0,k}$ over $I\subseteq\{1,\ldots,n\}$ of size $k$ is an isomorphism onto its image.
\end{lemma}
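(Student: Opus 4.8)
The plan is to prove the sharper statement that the morphism in question, which I write $\Phi_{n,k}\colon\M_{0,n}\to\prod_{|I|=k}\M_{0,I}$ (the product over $k$-element subsets $I\subseteq\{1,\dots,n\}$), is a closed immersion; since $\M_{0,n}$ is proper this is equivalent to the asserted isomorphism onto its image. As $\M_{0,n}$ is proper and the target separated, $\Phi_{n,k}$ is proper, so it suffices to show it is universally injective and unramified, i.e.\ injective on closed points and on Zariski tangent spaces. First I would reduce to $k=4$: since stabilization maps compose and every $4$-element subset sits inside a $k$-element one, the map $\Phi_{n,4}$ factors as $\Phi_{n,4}=q\circ\Phi_{n,k}$ for a suitable morphism $q$ (on each coordinate of the target, project $\prod_{|I|=k}\M_{0,I}$ to one factor indexed by a $k$-set containing that $4$-set, then stabilize). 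So if $\Phi_{n,4}$ is a closed immersion then it is universally injective and unramified, hence $\Phi_{n,k}$ is too, and being proper $\Phi_{n,k}$ is then a closed immersion.

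For $k=4$ I would induct on $n$, the case $n=4$ being the identity. For $n>4$ let $\pi\colon\M_{0,n}\to\M_{0,n-1}$ forget the last point. The $4$-subsets $I$ avoiding $n$ assemble into $\Phi_{n-1,4}\circ\pi$, while those containing $n$ give the maps $f_J\colon\M_{0,n}\to\M_{0,\{n\}\cup J}$ for $3$-element $J\subseteq\{1,\dots,n-1\}$, each target being isomorphic to $\mathbb{P}^1$. Thus $\Phi_{n,4}=(\Phi_{n-1,4}\times\mathrm{id})\circ g$, where $g$ is the morphism $(\pi,\prod_{|J|=3}f_J)\colon\M_{0,n}\to\M_{0,n-1}\times\prod_{|J|=3}\M_{0,\{n\}\cup J}$, and $\Phi_{n-1,4}\times\mathrm{id}$ is a closed immersion by induction, so it is enough to prove $g$ is a closed immersion. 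Being proper, $g$ need only be universally injective and unramified; and since $\pi$ alone separates points lying in distinct fibers and $\ker d\pi_x$ is the tangent space to the fiber of $\pi$ through $x$, this reduces to a fiberwise assertion: for every stable $(n-1)$-pointed rational curve $(C',p_1,\dots,p_{n-1})$, which is precisely a fiber of $\pi$ (the universal curve over $\M_{0,n-1}$ being $\M_{0,n}$), the morphism $C'\to\prod_{|J|=3}\M_{0,\{n\}\cup J}$ sending $x$ to the tuple whose $J$-entry ($J=\{a,b,c\}$) is the point of $\M_{0,\{n\}\cup J}$ determined by the four points $p_a,p_b,p_c,x$ of $C'$ is a closed immersion.

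This last assertion is where I expect the real work to lie. The idea is to recover from the values $f_J(x)$ both the component $C'_w$ containing $x$ and its location on $C'_w$. Taking $a,b,c$ to be marked points that are mutually separated on $C'$ by passing through $C'_w$, one finds $f_J(x)$ in the interior $\mathcal{M}_{0,4}$ and equal to the cross-ratio of $p_a,p_b,p_c,x$ computed on $C'_w$ after contracting the rest of $C'$; this pins down $x$ except at the finitely many points where it meets a node or a marked point. Taking instead $a,b$ on one side and $c$ on the other side of a node $e$ of $C'$, one finds $f_J(x)$ recording which side of $e$ contains $x$; ranging over all $e$ then determines $w$. The combinatorial fact ensuring such triples exist is that each side of every node of $C'$ carries at least two marked points: cutting an edge of the dual tree leaves a tree all of whose vertices still have valence $\ge 3$, and such a tree has at least three legs, one of them the cut edge. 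The first-order version, that every tangent direction to $C'$ at $x$ (one direction when $x$ is a smooth point, or the two branch directions when $x$ is a node) is seen by some $f_J$, follows by differentiating the same cross-ratios; the delicate bookkeeping is at the finitely many positions where $x$ meets a node or marked point of $C'$, where one must check that the bubbles successively produced by $x$ are peeled off in triangular fashion by suitable triples. Granting this, $g$, and hence $\Phi_{n,4}$ and then $\Phi_{n,k}$, is a closed immersion, which is the assertion.
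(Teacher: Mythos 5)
Your reduction to $k=4$ (factor $\Phi_{n,4}=q\circ\Phi_{n,k}$, deduce that $\Phi_{n,k}$ is a proper monomorphism, hence a closed immersion) is correct and is exactly the paper's first step. For the $k=4$ case the paper simply cites \cite[Theorem 1]{GG10}, describing that proof as an induction on $n$ via the boundary stratification; you instead organize the induction through the forgetful map $\pi:\M_{0,n}\to\M_{0,n-1}$ and Knudsen's identification of $\M_{0,n}$ with the universal curve, reducing to the claim that for each stable $(n-1)$-pointed curve $(C',p_1,\dots,p_{n-1})$ the map $C'\to\prod_{|J|=3}\M_{0,\{n\}\cup J}$ is a closed immersion. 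That reduction is clean and the formal steps (proper $+$ universally injective $+$ unramified $\Rightarrow$ closed immersion; $\ker d\pi_x=T_x(\text{fiber})$; the valence count showing each side of a node carries at least two marked points) are all fine.

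The gap is that the fiberwise claim --- the actual content of the lemma --- is not proved. Your cross-ratio argument handles $x$ in the smooth unmarked locus of a component $C'_w$, and the ``separating triple'' argument plausibly recovers the component, but at the finitely many points where $x$ meets a node or a marked point of $C'$ the map is defined by bubbling, the triples with $a,b$ on one side of a node $e$ and $c$ on the other are \emph{constant} on an entire side of $e$ (they land on the boundary point $\delta_{ab\mid cx}$ of $\M_{0,4}$ for every such $x$, including $x=e$), and at a node of $C'$ the tangent space is two-dimensional, so one must exhibit triples whose differentials separate the two branch directions. You explicitly write ``Granting this'' for precisely these verifications. Since everything else in your argument is soft, this deferred bookkeeping is where the theorem actually lives; as written the proposal is a correct strategy with the decisive step asserted rather than proved. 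To complete it you would need to carry out the case analysis at nodes and marked points of $C'$ (injectivity of points and of the two branch directions), or else invoke the published result as the paper does.
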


\begin{proof}
There is a factorization $\M_{0,n} \rightarrow \prod_I \M_{0,k} \rightarrow \prod_J \M_{0,4}$ where the latter product is over all $J\subseteq\{1,\ldots,n\}$ of size 4, so the general result follows immediately from the case $k=4$.  The fact that the product of stabilization maps to $\M_{0,4}\cong\mathbb{P}^1$ is an isomorphism onto its image is proven in \cite[Theorem 1.3]{GG11}: it follows by induction on $n$ using the compatability of the stabilization morphisms with the boundary stratification of $\M_{0,n}$.
\end{proof}

Lemmas \ref{lem:isoimage} (with $k=d+3$) and \ref{lem:assoc} imply that $\M_{0,n} \rightarrow \prod_I V_{d,d+3}\ChowQ\text{SL}_{d+1}$ is an isomorphism onto its image, so by the commutativity of (\ref{comdiag}) the same is true of $\M_{0,n} \rightarrow V_{d,n}\ChowQ\text{SL}_{d+1}$.  This completes the proof.

\section{GIT quotients}\label{section:GIT}

Having proven the existence of the morphisms $\varphi$ in Theorem \ref{thm:morphism}, we now focus on deriving some of their properties.

\subsection{Explicit description} \label{section:modular}  Every cycle parameterized by $(\mathbb{P}^d)^n\ChowQ\text{SL}_{d+1}$ is a sum of closures of full-dimensional orbits with multiplicity one \cite[Corollary 2.2.6]{Kap93}.  By the inclusion $V_{d,n}\ChowQ\text{SL}_{d+1} \subseteq (\mathbb{P}^d)^n\ChowQ\text{SL}_{d+1}$ this holds for $V_{d,n}\ChowQ\text{SL}_{d+1}$ as well.  By \cite[Theorem 0.4.3]{Kap93} and its proof, for each linearization $L$ such that the semistable locus is nonempty, at least one of these orbit closures is semistable and all semistable ones are equivalent---in the sense that they determine the same point in the GIT quotient.  The Chow-GIT morphism $V_{d,n}\ChowQ\text{SL}_{d+1} \rightarrow V_{d,n}\quotient_L\text{SL}_{d+1}$ is defined by sending each sum of orbit closures to the corresponding semistable orbit class.  On the other hand, by the discussion of the limit cycle in our proof of Theorem \ref{thm:morphism}, the morphism $\M_{0,n} \rightarrow V_{d,n}\ChowQ\text{SL}_{d+1}$ sends $\mathcal{C} = (C,p_1,\ldots,p_n)\in\M_{0,n}$ to the sum of orbit closures $Z(\mathcal{C})$ obtained by mapping $C$ to a quasi-Veronese curve in all possible ways corresponding to partitions of $d$ among the components of $C$.  So for each linearization there is one partition that leads to a semistable configuration (or multiple partitions leading to GIT-equivalent configurations in the case of strictly semistable points) and $\varphi$ sends $\mathcal{C}$ to the orbit of this configuration.

\subsection{GIT Stability} \label{section:stability} Stability for the action of $\text{SL}_{d+1}$ on $V_{d,n}$ can be viewed through the inclusion $V_{d,n} \subseteq (\mathbb{P}^d)^n$ into the space of configurations of $n$ points in $\mathbb{P}^d$, where it is worked out in \cite[Example 3.3.24]{DH98}.  The $\text{SL}_{d+1}$-ample cone of fractional linearizations for $(\mathbb{P}^d)^n$ is $\mathbb{Q}_{>0}^n$ and we view a vector $L=(x_1,\ldots,x_n)\in\mathbb{Q}_{>0}^n$ as assigning a positive rational weight to each point.  A configuration is semistable if and only if the total weight lying in any linear subspace $W\subsetneq\mathbb{P}^d$ is at most $\frac{\text{dim}(W)+1}{d+1}\cdot \sum_{i=1}^n x_i$.  Multiplying $L$ by a positive constant does not affect stability, so one can assume $\sum_{i=1}^n x_i = d+1$.  In this case we have: \begin{center}\emph{A point of $V_{d,n}$ is semistable if, and only if, the weight $w$ in each linear subspace $W\subsetneq\mathbb{P}^d$ satisfies $w \le \mathrm{dim}(W) + 1$.}\end{center}

The semistable locus is then non-empty precisely when $\text{max}\{x_i\} \le 1$, so the space of effective linearizations can be identified with the hypersimplex \[\Delta(d+1,n) := \{(x_1,\ldots,x_n)\in\mathbb{Q}^n~|~0\le x_i \le 1, \sum_{i=1}^n x_i = d+1\}.\] In particular, the unique $S_n$-invariant linearization is $L=(\frac{d+1}{n},\ldots,\frac{d+1}{n})$.

This polytope of linearizations is subdivided into closed chambers such that on their interiors the corresponding GIT quotients are constant.  The walls for this decomposition are of the form $\sum_{i\in I} x_i=k$ for $I\subset\{1,\ldots,n\}$ and $1 \le k \le d$.  Thus for each $d\in\{1,\ldots,n-3\}$ and $L\in\Delta(d+1,n)$ there is a morphism $\varphi : \M_{0,n} \rightarrow V_{d,n}\quotient_L\text{SL}_{d+1}$ and these morphisms do not change as $L$ varies within a fixed chamber.

\subsection{F-curves}\label{section:Fcurves}  The boundary of $\M_{0,n}$ is stratified by the number of nodes on the curves it parameterizes.   The irreducible components of 1-strata are called F-curves.  These are isomorphic to $\M_{0,4}\cong\mathbb{P}^1$ and are obtained by attaching maximally degenerate chains of $\mathbb{P}^1$s to four points on a $\mathbb{P}^1$ (the empty chain consisting of a single point is allowed).  Varying the cross-ratio of the attaching points traces out the F-curve.  We call the chains \emph{legs} of the F-curve and the component with the attaching points the \emph{spine}.  The numerical equivalence class of an F-curve is determined by the partition of $\{1,\ldots,n\}$ into four subsets indicating which marked points lie on which leg \cite[Lemma 4.3]{KM96b}.

By the F-curve \emph{corresponding} to the partition $\{1,\ldots,n\} = \sqcup_{j=1}^4 N_j$, we mean the numerical equivalence class of F-curves with points indexed by $N_j$ on the $j^{\text{th}}$ leg.  By a \emph{symmetric} F-curve, we mean the image of an F-curve in $\M_{0,n}/S_n$.  In this case only the number of marked points on each leg is relevant, so we write $n_j := |N_j|$ and speak of the symmetric F-curve corresponding to $n=n_1 + n_2 +n_3 + n_4$.  By abuse of language we also refer to an F-curve in $\M_{0,n}$ corresponding to $n=n_1 + n_2 +n_3 + n_4$, by which we mean any F-curve whose image in $\M_{0,n}/S_n$ corresponds to this partition.

\begin{proposition}\label{prop:cont}
Let $L=(x_1,\ldots,x_n)\in\Delta(d+1,n)$.  Any F-curve corresponding to $\sqcup_{j=1}^4 N_j$ such that $\sum_{i\in N_j} x_i \ge \alpha_j,~j=1,\ldots,4$, for some integers $\alpha_j \ge 0$ with $\sum_{j=1}^4 \alpha_j = d$, is contracted by $\varphi : \M_{0,n} \rightarrow V_{d,n}\quotient_L\text{SL}_{d+1}$.
\end{proposition}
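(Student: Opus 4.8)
The plan is to use the explicit description of $\varphi$ from \S\ref{section:modular}: for $\mathcal{C}=(C,p_1,\ldots,p_n)\in\M_{0,n}$, the image $\varphi(\mathcal{C})$ is the GIT class of the $L$-semistable orbit closure among the cycles $Z_{d_1\cdots d_r}(\mathcal{C})$ obtained by mapping $C$ to a quasi-Veronese curve according to the partitions $d=d_1+\cdots+d_r$ of $d$ over the components of $C$. Since $\varphi|_F$ is a morphism from $F\cong\mathbb{P}^1$, it suffices to show $\varphi(\mathcal{C}_t)$ is independent of $t$ for the generic member $\mathcal{C}_t$ of $F$, whose spine $E_0$ is a smooth $\mathbb{P}^1$ carrying the four attaching nodes at cross-ratio $t$, with a maximally degenerate chain carrying $N_j$ attached at the $j$-th node. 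The key idea is that the partition assigning degree $0$ to $E_0$ and degree $\alpha_j$ to the $j$-th leg (with a suitable further distribution of $\alpha_j$ among that leg's components, chosen below) yields an $L$-semistable configuration $\Phi_t$ that does not depend on $t$; then $\varphi(\mathcal{C}_t)=[\Phi_t]$ for all $t$, so $F$ is contracted.

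First I would record the combinatorial facts that make this work. Setting $\beta_j:=\sum_{i\in N_j}x_i$, the equalities $\sum_j\beta_j=d+1$ and $\sum_j\alpha_j=d$ give the identity $\sum_{j=1}^4(\beta_j-\alpha_j)=1$, and combining $\beta_j\ge\alpha_j$ with this identity also yields $\beta_j\le\alpha_j+1$ for each $j$. Next, with $E_0$ contracted, $\phi_L$ sends $\mathcal{C}_t$ onto a quasi-Veronese curve $X=R_1\cup\cdots\cup R_4$, where $R_j=\phi_L(\text{leg}_j)$ is a rational normal curve of degree $\alpha_j$ spanning a linear subspace $P_j\cong\mathbb{P}^{\alpha_j}$ (a single point when $\alpha_j=0$), the point $v:=\phi_L(E_0)$ lies on every $R_j$, the $P_j$ pairwise (and mutually) meet only in $v$, and together they span $\mathbb{P}^d$. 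These geometric statements follow from a cohomology computation on $C$ (vanishing of $H^1$ of the appropriate twists of $L$ on subcurves, as in \S\ref{section:chow}) together with $\sum_j\alpha_j=d$; in particular $X$ is a flat limit of rational normal curves, so $\Phi_t\in V_{d,n}$, and since $E_0$ is contracted, $\Phi_t$ does not depend on $t$.

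It remains to arrange $L$-semistability. For each $j$ with $\alpha_j\ge 1$ the weights $(x_i)_{i\in N_j}$ together with the weight $\alpha_j+1-\beta_j\in[0,1]$ on the attaching node $q_j$ form a linearization in $\Delta(\alpha_j+1,n_j+1)$; applying Theorem \ref{thm:morphism} together with the explicit description of $\varphi$ to the $(n_j+1)$-pointed leg produces a distribution of $\alpha_j$ over that leg's components for which the configuration $\{\phi_L(p_i):i\in N_j\}\cup\{v\}$ on $P_j$ is GIT-semistable — equivalently, for every proper subspace $W'\subsetneq P_j$ one has $\sum_{\phi_L(p_i)\in W'}x_i\le\dim W'+1$ when $v\notin W'$ and $\sum_{\phi_L(p_i)\in W'}x_i\le\dim W'+\beta_j-\alpha_j$ when $v\in W'$ (for $\alpha_j=0$ the leg is contracted to $v$ and nothing is chosen). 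Using these distributions I would verify semistability of $\Phi_t$ directly: given a proper $W\subsetneq\mathbb{P}^d$, set $W_j:=W\cap P_j$ and sum the four leg contributions. If $v\notin W$, the nonempty $W_j$ are pairwise disjoint, so $\dim W+1\ge\sum_j(\dim W_j+1)$ and the first inequality per leg gives $\sum_{\phi_L(p_i)\in W}x_i\le\dim W+1$. If $v\in W$, then $v\in W_j$ for all $j$; separating the legs with $W_j=P_j$ (where the $N_j$-weight is $\beta_j=(\beta_j-\alpha_j)+\alpha_j$) from those with $W_j\subsetneq P_j$ (using the second inequality), summing, and invoking $\sum_j(\beta_j-\alpha_j)=1$ together with $\dim W\ge\sum_{W_j=P_j}\alpha_j+\sum_{W_j\subsetneq P_j}\dim W_j$ (a span of subspaces through $v$ meeting only in $v$), one again gets $\sum_{\phi_L(p_i)\in W}x_i\le\dim W+1$. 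Hence $\Phi_t$ is $L$-semistable, $\varphi(\mathcal{C}_t)=[\Phi_t]$ is constant, and $\varphi$ contracts $F$.

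The step I expect to be the main obstacle is precisely this semistability verification: it forces one to distribute the degree within each leg rather than concentrate it at an end or next to the spine, to import Theorem \ref{thm:morphism} on the legs, and to run the dimension count above, whose success rests entirely on the identity $\sum_j(\beta_j-\alpha_j)=1$. Secondary care is needed at the boundary of the setup: the degenerate ranges $\alpha_j\ge n_j-1$ (where a leg is nearly a projective frame) must be checked by hand; when some $n_j=1$ the $j$-th leg is a single marked point on the spine with no component to carry a positive $\alpha_j$, in which case the hypothesis forces that point to have weight $1$ and one either replaces the $\alpha_j$ by another valid choice or projects from that weight-one point — an operation realized by $S$-equivalence in the GIT quotient — to reduce to a lower-dimensional GIT quotient whose residual configuration has no moduli; and when $L$ lies on a wall of $\Delta(d+1,n)$ one perturbs into an adjacent chamber where the hypothesis persists and invokes the variation-of-GIT morphism onto the wall quotient.
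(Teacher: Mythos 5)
Your argument is correct in outline but takes a genuinely different, and considerably heavier, route than the paper's. You prove the contraction by \emph{construction}: you single out the partition giving the spine degree $0$ and the $j$-th leg degree $\alpha_j$, distribute $\alpha_j$ within each leg so as to make the leg semistable for an induced linearization, and then verify global $L$-semistability by a dimension count whose engine is the identity $\sum_j(\beta_j-\alpha_j)=1$. The paper instead argues by \emph{necessity}: the explicit description of $\varphi$ in \S\ref{section:modular} already guarantees that some partition $d=d_1+\cdots+d_r$ yields a semistable configuration, and semistability alone forces any connected subcurve carrying marked-point weight $>m$ to map to a curve of degree $\ge m$ (otherwise it would lie in a $\mathbb{P}^{m-1}$ carrying weight $>m$). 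Applied to the four legs with $m=\alpha_j$, this forces the legs to absorb all of the degree $d$ in \emph{every} semistable partition, so the spine is contracted and the image is independent of the cross-ratio; the boundary case $\sum_{i\in N_j}x_i=\alpha_j$ is handled by observing the configuration is then strictly semistable and GIT-equivalent to one with contracted spine. The necessity argument buys exactly the statement of Proposition \ref{prop:cont} while entirely avoiding the degenerate-case bookkeeping you flag but do not carry out (legs with $n_j=1$ or with $\alpha_j\ge n_j-1$, node weight $\alpha_j+1-\beta_j$ equal to $0$, linearizations on walls of $\Delta(d+1,n)$) --- each of these would require genuine additional work in your setup, since Theorem \ref{thm:morphism} as stated does not apply to the leg in those ranges. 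What your construction buys in exchange is an explicit description of the image configuration (which the paper obtains separately in \S\ref{section:modular} and uses again in Proposition \ref{prop:symcont}); for the contraction statement itself, I would reorganize around the necessity argument.
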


\begin{proof}
By the description in \S \ref{section:modular}, $\varphi$ sends a marked curve $(C,p_1,\ldots,p_n)\in\M_{0,n}$ with irreducible decomposition $C=C_1\cup\cdots\cup C_r$ to a semistable configuration of points lying on a quasi-Veronese curve $V=V_1\cup\cdots\cup V_r$.  This corresponds to a partition $d=d_1 + \cdots + d_r$ where each $V_i$ is either a point ($d_i = 0$) or a rational normal curve of degree $d_i \ge 1$.  Recall from \S \ref{section:stability} that $L=(x_1,\ldots,x_n)$ assigns weight $x_i$ to the marked point $p_i$ and a configuration in $\mathbb{P}^d$ is semistable if and only if there is weight at most 1 at a point, 2 on a line, 3 on a plane, etc.  

It follows that when a connected set of components $\cup_{i\in I}C_i$, $I\subseteq\{1,\ldots,r\}$, carries marked points of total weight $>m$ then the corresponding curve $\cup_{i\in I}V_i$ must have degree $\ge m$, since otherwise it would be contained in a $\mathbb{P}^{m-1}$, violating semistability.  Since $V$ has degree $d$, the spine will be contracted if the degrees of the legs add up to $d$.  By the preceding observation, this occurs if the weight of marked points on the $j^{\text{th}}$ leg of the F-curve is $> \alpha_j$ for integers $\alpha_j \ge 0$ satisfying $\sum_{j=1}^4 \alpha_j = d$.  In fact, it is enough to require $\sum_{i\in N_j}x_i \ge \alpha_j$, because if the weight on a leg is exactly $\alpha_j$, then the configuration is strictly semistable and the GIT quotient identifies it with a configuration in which the spine has been contracted.
\end{proof}

In the case of a symmetric linearization we can say more.

\begin{proposition}\label{prop:symcont}
For $L=(\frac{d+1}{n},\ldots,\frac{d+1}{n})$, the morphism $\varphi : \M_{0,n} \rightarrow V_{d,n}\quotient_L\text{SL}_{d+1}$ contracts $F$-curves corresponding to $n=\sum_{j=1}^4 n_j$, $n_1\le\cdots\le n_4$, satisfying either of the following conditions, and no others:
\begin{itemize}
\item $n_j \ge \frac{n\alpha_j}{d+1},~j=1,\ldots,4$, for $0\le \alpha_1 \le \cdots \le \alpha_4\in\mathbb{Z}$ with $\sum_{j=1}^4 \alpha_j = d$
\item $n_j \le \frac{n\beta_j}{d+1},~j=1,\ldots,4$, for $1\le \beta_1 \le \cdots \le \beta_4\in\mathbb{Z}$ with $\sum_{j=1}^4 \beta_j = d+2$
\end{itemize}
\end{proposition}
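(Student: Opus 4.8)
The plan is to make the description of $\varphi$ from \S\ref{section:modular} quantitative along the F-curve and then track when the resulting GIT class is constant. Normalize by setting $w_j:=\frac{(d+1)n_j}{n}$, so that $\sum_{j=1}^4 w_j=d+1$ and each $w_j>0$. A short greedy argument---using that $w_1\le\dots\le w_4$ makes the floors and the ceilings sorted too---shows that the first bulleted condition is equivalent to $\sum_{j=1}^4\lfloor w_j\rfloor\ge d$ and the second to $\sum_{j=1}^4\lceil w_j\rceil\le d+2$, and I will work with these reformulations. Sufficiency of the first condition is then immediate: it is Proposition~\ref{prop:cont} applied with $x_i=\frac{d+1}{n}$, since $\sum_{i\in N_j}x_i=w_j\ge\alpha_j$ after reordering so that $\alpha_1\le\dots\le\alpha_4$. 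Thus two things remain: that the second condition also forces contraction, and that no F-curve outside the union of the two conditions is contracted.

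Both hinge on identifying the semistable multidegree of the degree-$d$ line bundle on a fibre $(C,p_1,\dots,p_n)$ of the F-curve, with spine $C_0$ and maximally degenerate legs carrying $N_1,\dots,N_4$; since the numerical class of the F-curve depends only on the partition $\sqcup_j N_j$, I may take each leg to be a caterpillar. I would show that $\varphi$ assigns degree $\delta_j=\lceil w_j\rceil-1$ to the $j$-th leg---hence degree $d_0:=d-\sum_j\delta_j=d+4-\sum_j\lceil w_j\rceil$ to the spine---this being the unique semistable multidegree off the walls $\sum_{i\in I}x_i=k$ of \S\ref{section:stability}, and one of several GIT-equivalent ones on a wall. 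The input is a balancing estimate: a leg carrying too little degree forces a maximal chain of contracted components on it to pile more than $\frac{n}{d+1}$ marked points at one point, violating the stability bound; spreading the degree through the caterpillar, keeping the cluster that lands on the attaching node $v_j$ below the bound, is enough. Granting this, $\varphi(C,p_\bullet)$ is the spine mapped to a rational normal curve $V_0$ of degree $d_0$ through the four node-images $v_1,\dots,v_4$, whose cross-ratio is the coordinate on the F-curve, together with, at each $v_j$, a length-$\delta_j$ chain of rational normal curves carrying $N_j$ (collapsing onto $v_j$ when $\delta_j=0$). Since the $\text{SL}_{d+1}$-stabilizer of $V_0$ surjects onto $\text{Aut}(V_0)\cong\text{PGL}_2$, three of the $v_j$ can be normalized, so $\varphi$ depends on the moduli coordinate only through the cross-ratio, and the F-curve is contracted exactly when this dependence disappears---either because $V_0$ is a point, or because the configuration is strictly semistable and, upon passing to the closed orbit in its orbit-closure, loses the cross-ratio.

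It then remains to run the trichotomy on $d_0$. If $d_0=0$---equivalently $\sum_j\lceil w_j\rceil=d+4$, which together with $\sum_j w_j=d+1$ forces every $w_j\notin\mathbb{Z}$ and $\sum_j\lfloor w_j\rfloor=d$, i.e.\ the first condition---then $V_0$ is a point and the cross-ratio is gone. If $d_0\ge2$---equivalently $\sum_j\lceil w_j\rceil\le d+2$, the second condition---then I would argue that the configuration is strictly semistable with closed orbit independent of the cross-ratio: the span $\langle V_0\rangle$ already sits at its stability threshold $d_0+1$, and since $\deg V_0=d_0\ge2$ the curve $V_0$ can degenerate so that attaching points merge; carrying out the associated iterated destabilization pushes the marked points of a positive-degree leg off $V_0$ and collapses the images of the remaining $v_j$, so that the limit cycle in $\text{Chow}((\mathbb{P}^d)^n)$ is moduli-independent---and the inequality $\sum_j w_j\le\sum_j\lceil w_j\rceil\le d+2$ is exactly what guarantees enough threshold subspaces to drive this. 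Finally, if $d_0=1$ then $V_0$ is a line: if some $w_j\in\mathbb{Z}$ a GIT-equivalent representative has the spine contracted, putting us in the first condition; otherwise the weights may be arranged so that the four node-images sit on the line $V_0$ with weights strictly below $1$ summing to $2$, hence form a stable configuration there, so distinct cross-ratios give distinct points and the F-curve is not contracted. This exhausts the cases, and the resulting list of contracted F-curves is what one then matches against the list computed in \cite{AGSS10} from Fakhruddin's formula to deduce Theorem~\ref{thm:CB}.

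The bulk of the work, and the main obstacle, is the GIT of point configurations on reducible quasi-Veronese curves, surfacing in two places. First, pinning down the semistable multidegree $\delta_j=\lceil w_j\rceil-1$ requires checking that for this choice no proper linear span of a sub-chain of the image curve exceeds its bound---a Hall-type verification that must be organized over the wall arrangement $\sum_{i\in I}x_i=k$ and that uses that the legs are maximally degenerate. Second, the case $d_0\ge2$ needs an explicit analysis of the (iterated) destabilization and of the limit cycle in $\text{Chow}((\mathbb{P}^d)^n)$; here I would adapt the configuration-stability analysis of \cite[Example~3.3.21]{DH98} together with the Chow-limit computation in the proof of Theorem~\ref{thm:morphism}. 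The wall cases $w_j\in\mathbb{Z}$ are a persistent but minor nuisance, handled by moving among the GIT-equivalent representatives of $\varphi(C,p_\bullet)$.
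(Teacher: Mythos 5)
Your treatment of the first bullet (via Proposition \ref{prop:cont}: spine degree zero) and your overall skeleton --- spine contracted, versus spine of positive degree but with the cross-ratio forgotten anyway, versus genuinely not contracted --- match the paper. The divergence, and the gap, is in your mechanism for the second bullet. You assert that when the spine degree $d_0$ is at least $2$ the configuration is strictly semistable because ``the span $\langle V_0\rangle$ already sits at its stability threshold $d_0+1$,'' and you contract by an iterated destabilization. This is false in general. Take $n=7$, $d=2$, and the F-curve $(n_1,\ldots,n_4)=(1,2,2,2)$: here $w_j=3n_j/7$, every $\lceil w_j\rceil$ equals $1$, so $\sum\lceil w_j\rceil=4=d+2$, all four legs are contracted, and the spine is a conic of degree $d_0=2$. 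The image is four points of $\mathbb{P}^2$ in general position with weights $(3/7,6/7,6/7,6/7)$; every point carries weight $<1$ and every line weight $<2$, so the configuration is \emph{properly stable} and no destabilization is available (and for $d_0=d$ the span $\langle V_0\rangle$ is the ambient space, so no threshold applies to it; more generally the weight on $\langle V_0\rangle$ is only the weight clustered at the attaching nodes, which need not reach $d_0+1$). The F-curve is nonetheless contracted, for a reason your argument does not supply: $d+2$ points in general position in $\mathbb{P}^d$ form a projective frame, so $\text{SL}_{d+1}$ acts transitively on such configurations and the orbit closure cannot vary with the cross-ratio.

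That projective rigidity of configurations supported on at most $d+2$ points is exactly the paper's mechanism for the second bullet: since every cycle in the Chow quotient is a sum of full-dimensional orbit closures, varying the cross-ratio fails to produce a one-parameter family of cycles precisely when the support of the configuration consists of at most $d+2$ points, and the condition $\sum_1^4\beta_j=d+2$ is converted into that bound leg by leg --- a leg of degree at most $\beta_j-1$ contributes at most $\beta_j$ distinct support points, using the caterpillar structure of the legs. Your identification of the semistable multidegree $\delta_j=\lceil w_j\rceil-1$ and the floor/ceiling reformulation of the two bullets are correct and would be a reasonable way to organize the ``no others'' direction (the $d_0=1$ case in particular), but as written the sufficiency of the second condition rests on a destabilization that simply does not occur in the properly stable cases, so the rigidity observation has to be restored for the proof to go through.
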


\begin{proof}
The contractions described by the first inequality follow from Proposition \ref{prop:cont} and are equivalent to the spine being given degree zero, since the weight on the $j^{\text{th}}$ leg in this case is $\frac{d+1}{n}n_j$.  The only other way for the F-curve to be contracted is if the spine is given positive degree but varying the cross-ratio of its attaching points no longer yields a 1-dimensional family of orbit closures.  Since we can assume each orbit closure is full-dimensional, this occurs precisely when the configuration of points in $\mathbb{P}^d$ is supported on a collection of at most $d+2$ points, and we will show that this occurs precisely when the second inequality in this proposition is satisfied.  Since the weight on the $j^{\text{th}}$ leg is $\frac{d+1}{n}n_j$, the inequality $n_j \le \frac{n\beta_j}{d+1}$ is equivalent to the $j^{\text{th}}$ leg having degree $ \le \beta_j - 1$, by an argument similar to the one in the proof of Proposition \ref{prop:cont}.  Now each leg of an F-curve is a chain of $\mathbb{P}^1s$ in which each component carries a single marked point, except for the last component which carries two marked points.  Therefore, the image of the leg having degree $\le \beta_j -1$ is equivalent to the image carrying $\le \beta_j$ distinct points.  Thus the condition $\sum_{j=1}^4\beta_j = d+2$ implies that the image of the entire curve carries $\le d+2$ distinct points.
\end{proof}

\subsection{Hassett spaces} Following \cite{Has03}, for $\mathcal{A} := \{a_1,\ldots,a_n\}\in\mathbb{Q}^n\cap (0,1]^n$ such that $\sum_{i=1}^n a_i \ge 2$ we denote by $\overline{\mathcal{M}}_{0,\mathcal{A}}$ the compact moduli space of $n$-pointed nodal rational curves for which $K + a_1p_1 + \cdots + a_np_n$ is ample and such that the $p_i$ are smooth and $\sum_{i\in I}a_i \le 1$ if $\{p_i\}_{i\in I}$ coincide.  Setting $\mathcal{A} = \{1,\ldots,1\}$ gives the usual moduli space $\M_{0,n}$.  Note that $L=(x_1,\ldots,x_n)\in\Delta(d+1,n)\cap\mathbb{Q}^n_{>0}$ is a valid choice of weight data for a Hassett space since $x_i \in \mathbb{Q}\cap(0,1]$, and $d \ge 1$ so $\sum_{i=1}^n x_i \ge 2$.  For any vector of weights $\mathcal{A}' = \{a_1',\ldots,a_n'\}$ with $a_i' \le a_i$, there is a birational contraction $\M_{0,\mathcal{A}} \rightarrow \M_{0,\mathcal{A}'}$ \cite[Theorem 4.1]{Has03}.

\begin{proposition}\label{prop:factor}
For each $L\in\Delta(d+1,n)$, the morphism $\varphi : \M_{0,n} \rightarrow V_{d,n}\quotient_L\text{SL}_{d+1}$ factors through $\M_{0,L}$.
\end{proposition}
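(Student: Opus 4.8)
The plan is to exhibit $\varphi$ as a composition $\M_{0,n} \to \M_{0,L} \to V_{d,n}\quotient_L\mathrm{SL}_{d+1}$ by identifying, over each point of $\M_{0,L}$, a well-defined semistable configuration. Recall from \S\ref{section:modular} that $\varphi$ sends $(C,p_1,\ldots,p_n)$ to the semistable orbit among the configurations $\phi_{L}(p_1),\ldots,\phi_L(p_n)$ obtained by distributing the degree $d$ among the components of $C$ according to a partition $d = d_1+\cdots+d_r$. The key observation is that, since the semistability condition for $L=(x_1,\ldots,x_n)$ only sees the \emph{total weight} $\sum_{i\in I}x_i$ lying on each linear subspace, the contracted components are precisely those on which the semistable partition assigns degree $0$, and these are (up to the strictly semistable boundary) exactly the components that get contracted when one passes from $\M_{0,n}$ to $\M_{0,L}$, namely the unstable components of $(C,x_1p_1+\cdots+x_np_n)$ together with the collapsing of marked points whose weights sum to at most $1$.

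First I would recall Hassett's reduction morphism $c_L : \M_{0,n} \to \M_{0,L}$ from \cite[Theorem 4.1]{Has03}, valid since $L\in\Delta(d+1,n)\cap\mathbb{Q}^n_{>0}$ is admissible weight data with $\sum x_i = d+1 \ge 2$. Next I would show the map to the Chow quotient factors through $c_L$ by a separatedness argument paralleling the proof of Theorem \ref{thm:morphism}: since $\M_{0,n}\to V_{d,n}\quotient_L\mathrm{SL}_{d+1}$ and $c_L$ both restrict to the identity on the dense open $\mathcal{M}_{0,n}$, and $\M_{0,n}$ is a smooth proper variety while $\M_{0,L}$ is normal and proper, it suffices to check that $\varphi$ is constant on the fibers of $c_L$. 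A fiber of $c_L$ over a point of $\M_{0,L}$ consists of stable curves differing only by which semistable-for-$L$ marked points have collided and which rational tails (carrying total weight $\le 1$) have been sprouted; for any such curve the semistable partition of $d$ necessarily assigns degree $0$ to every sprouted tail — otherwise that tail would carry weight $\le$ its degree on a proper subspace, contradicting or hitting the boundary of semistability — so the resulting configuration in $(\mathbb{P}^d)^n$, and hence its $\mathrm{SL}_{d+1}$-orbit class in the GIT quotient, depends only on the image in $\M_{0,L}$. Finally, the induced map $\M_{0,L}\to V_{d,n}\quotient_L\mathrm{SL}_{d+1}$ is a morphism because $c_L$ is surjective with $\M_{0,n}$ proper, so the set-theoretic factorization upgrades to a morphism by descent along the proper surjection $c_L$ (or by normality of $\M_{0,L}$ together with the fact that the composite is a morphism on a dense open and the map is proper).

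The main obstacle I expect is the bookkeeping at strictly semistable configurations: when $\sum_{i\in N_j}x_i$ equals an integer exactly, the partition of $d$ realizing a semistable point is not unique, and one must check that all the competing partitions yield GIT-equivalent configurations that are \emph{also} the ones naturally associated to the (possibly more contracted) curve in $\M_{0,L}$. This is precisely the phenomenon already handled in the last line of the proof of Proposition \ref{prop:cont} — a leg of weight exactly $\alpha_j$ may be given degree $\alpha_j$ or degree $0$, but the GIT quotient identifies both — so I would invoke that analysis to conclude that the fiber-constancy of $\varphi$ over $\M_{0,L}$ holds even along the walls of $\Delta(d+1,n)$ passing through $L$, completing the factorization.
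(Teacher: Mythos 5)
Your argument is correct in substance but takes a genuinely different route from the paper. The paper's proof is essentially a two-line reduction: by a result of Alexeev (\cite[Lemma 4.6]{Fak09}), $\varphi$ factors through $\M_{0,L}$ as soon as every F-curve contracted by the Hassett reduction $c_L$ is contracted by $\varphi$; the Hassett-contracted F-curves are exactly those with $\sum_{i\in N_4}x_i\ge d$, and these are handled by Proposition \ref{prop:cont} with $(\alpha_1,\ldots,\alpha_4)=(0,0,0,d)$. You instead prove fiber-constancy of $\varphi$ over $c_L$ directly and descend. Your route is longer but buys something the paper's does not: an explicit modular description of the induced map $\M_{0,L}\to V_{d,n}\quotient_L\text{SL}_{d+1}$ (which the paper only uses informally, e.g.\ in the twisted-cubics example), whereas the F-curve criterion black-boxes the geometry. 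Two points in your write-up need repair. First, your stated reason that a sprouted tail of weight $\le 1$ must receive degree $0$ is not the right one: the tail itself causes no semistability violation (it has small weight and small span); the obstruction comes from its \emph{complement}, which carries weight $\ge d$ but would span only a $\mathbb{P}^{d-e}$ if the tail had degree $e\ge 1$, violating the bound $\dim W+1$ unless $e=0$, or $e=1$ with tail weight exactly $1$ (the strictly semistable case you correctly flag and which is GIT-equivalent to $e=0$). This is precisely the observation in the proof of Proposition \ref{prop:cont} applied to the complementary subcurve. Second, descending a morphism along $c_L$ from fiber-constancy requires more than properness and surjectivity: you need $(c_L)_*\mathcal{O}_{\M_{0,n}}=\mathcal{O}_{\M_{0,L}}$, equivalently connected fibers together with normality of the target. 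This holds here because $c_L$ is a birational morphism of normal projective varieties (Zariski's main theorem), but it is the actual hypothesis of the rigidity lemma and should be invoked explicitly. With those two repairs your argument is complete.
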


\begin{proof}
By a result of Alexeev (see \cite[Lemma 4.6]{Fak11}) it is enough to show that every F-curve contracted by the morphism $\M_{0,n} \rightarrow \M_{0,L}$ is also contracted by $\varphi$.  This morphism contracts precisely those F-curves satisfying \begin{equation}\label{eqn:Has} \sum_{i\in N_1}x_i + \sum_{i\in N_2}x_i + \sum_{i\in N_3}x_i \le 1 \end{equation}
 where without loss of generality the leg $N_4$ carries the most weight.  Indeed, by the ampleness condition in Hassett's definition of stability a component with one node must have weight $>1$, so if (\ref{eqn:Has}) is satisfied then the first three legs must be contracted and all that remains is the spine and the fourth leg---but then the spine becomes a component with one node and $\le 1$ weight, so it too is contracted.  Conversely, if (\ref{eqn:Has}) does not hold, then either all three of the first legs are contracted or at least one of the first three legs remains.  In the first case the spine remains since it then carries $>1$ weight.  In the second case at least one of the first three legs remains so the spine remains as well, since it has two nodes and hence is Hassett-stable regardless of its weight.

The total weight of points is $d+1$, so (\ref{eqn:Has}) is equivalent to $\sum_{i\in N_4} x_i \ge d$, thus by taking $(\alpha_1,\alpha_2,\alpha_3,\alpha_4)=(0,0,0,d)$ in Proposition \ref{prop:cont} we see that all the F-curves contracted by Hassett are contracted by $\varphi$ as well. 
\end{proof}

Note that for $L\in\Delta(d+1,n)$ the Hassett space $\M_{0,L}$ parameterizes trees of $\mathbb{P}^1$s with at most $d$ leaves.  Indeed, as noted above a component with exactly one node must carry $> 1$ weight for the curve to be stable.  But there is $d+1$ weight total, so there is not enough weight to have $\ge d+1$ such components.

\subsection{Twisted cubics} We conclude this section with an example: $d=3$.  By Proposition \ref{prop:factor}, $\varphi$ factors as $\M_{0,n} \rightarrow \M_{0,L} \rightarrow V_{3,n}\quotient_L\text{SL}_4$, and the first arrow is well-understood so we focus on the second.  Here $\M_{0,L}$ parameterizes trees with at most three leaves, each leaf with weight $>1$.  Each Hassett-stable curve $C$ gets mapped to a degree 3 quasi-Veronese curve, namely a degeneration of a twisted cubic.  If $C$ is smooth then it gets mapped to a twisted cubic.  If $C$ is nodal then any component with weight $>2$ gets sent to a plane conic in $\mathbb{P}^3$, any component with weight $>1$ gets sent to a line, and all the marked points on components that are contracted lie at the corresponding singular point of the image curve in $\mathbb{P}^3$.  Figure \ref{fig:cubic} shows all the types of curves that can occur.  The number on a component indicates the degree of its image.  In all cases the image curve is non-degenerate, since if it were contained in a $\mathbb{P}^2$ then the configuration of points would be unstable.

\begin{figure}\begin{center}
\scalebox{0.8}{\input{cubic.pst}}
\caption{The morphism $\M_{0,L} \rightarrow V_{3,n}\quotient_L\text{SL}_4$}
\label{fig:cubic}
\end{center}\end{figure}

\section{Conformal Blocks}

We refer to the literature for background on conformal blocks, e.g. \cite{TUY89,BL94,Loo95,Bea96}.  The main point for us is that for each Lie algebra $\mathfrak{g}$, level $l\in\mathbb{Z}_{\ge 0}$, and $n$-tuple of dominant weights, there is a vector bundle defined on $\M_{g,n}$.  We focus on the conformal blocks bundles $D_k^{\mathfrak{sl}_n}$ defined on $\M_{0,n}$ corresponding to $\mathfrak{sl}_n$ with level $l=1$ and weights $(\omega_k,\ldots,\omega_k)$.  These are line bundles (by \cite[\S5.2.2]{Fak11}, or by the Verlinde formula) and they are the focus of the paper \cite{AGSS10}.

\subsection{Fakhruddin's formula} In \cite{Fak11}, Fakhruddin computes the Chern classes of the conformal blocks vector bundles on $\M_{0,n}$.  This is then used to find the intersection numbers between their determinants and F-curves.  The formula for the line bundles $D_k^{\mathfrak{sl}_n}$ is as follows \cite[Proposition 5.2]{Fak11}:

\begin{proposition} Let $F$ be an F-curve corresponding to $n = n_1 + \cdots + n_4$.  Let $\nu_j = kn_j~(\text{mod}~n)\in \{0,1,\ldots,n-1\}$ and $\nu_M = \text{max}\{\nu_j\}, \nu_m = \text{min}\{\nu_j\}$.  Then \[D_k^{\mathfrak{sl}_n}\cdot F = \begin{cases} \nu_m$, if $\sum_{j=1}^4 \nu_j = 2n$ and $\nu_M + \nu_m \le n \cr n - \nu_M$, if $\sum_{j=1}^4 \nu_j = 2n$ and $\nu_M + \nu_m \ge n \cr 0, $ otherwise $ \end{cases}\]
\end{proposition}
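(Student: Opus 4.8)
This is \cite[Proposition 5.2]{Fak09}, so the plan is to specialize Fakhruddin's Chern class computation to $\mathfrak{g}=\mathfrak{sl}_n$ at level $1$ with all weights equal to $\omega_k$. Write $\mathbb{V}(\lambda_1,\ldots,\lambda_m)$ for the corresponding conformal block bundle on $\M_{0,m}$ (a vector space when the curve is fixed). The first step is factorization: an F-curve corresponding to $n=n_1+\cdots+n_4$ has spine a varying $4$-pointed $\mathbb{P}^1$ and four rigid, maximally degenerate legs carrying the marked points, so restricting $D_k^{\mathfrak{sl}_n}$ to it and taking degrees gives
\[ D_k^{\mathfrak{sl}_n}\cdot F \;=\; \sum_{\mu_1,\ldots,\mu_4}\Bigl(\prod_{j=1}^4\operatorname{rank}\mathbb{V}\bigl((\omega_k)^{n_j},\mu_j^*\bigr)\Bigr)\cdot\deg\mathbb{V}(\mu_1,\mu_2,\mu_3,\mu_4), \]
the sum over $4$-tuples of level-$1$ weights; the leg factors are ranks on rigid genus-$0$ curves, hence constants, and the last factor is the line bundle on the spine's moduli $\M_{0,4}$.

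Next I use the structure of $\mathfrak{sl}_n$ at level $1$: the level-$1$ weights are $\omega_0=0,\omega_1,\ldots,\omega_{n-1}$, which I identify with $\mathbb{Z}/n\mathbb{Z}$; one has $\omega_a^*=\omega_{n-a}$; and the fusion ring is $\mathbb{Z}[\mathbb{Z}/n\mathbb{Z}]$, so by the Verlinde formula $\operatorname{rank}\mathbb{V}(\omega_{a_1},\ldots,\omega_{a_m})=1$ if $a_1+\cdots+a_m\equiv 0\pmod n$ and $0$ otherwise. Hence the leg factor for $N_j$ is nonzero, and then equal to $1$, exactly when $\mu_j=\omega_{\nu_j}$ with $\nu_j\equiv kn_j\pmod n$, $\nu_j\in\{0,\ldots,n-1\}$, and the sum collapses to the single term
\[ D_k^{\mathfrak{sl}_n}\cdot F \;=\; \deg\mathbb{V}(\omega_{\nu_1},\omega_{\nu_2},\omega_{\nu_3},\omega_{\nu_4}). \]

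It remains to evaluate this degree on $\M_{0,4}\cong\mathbb{P}^1$. For a rank-$1$ block there, Fakhruddin's first Chern class formula becomes $\deg\mathbb{V}(\lambda_1,\ldots,\lambda_4)=\sum_i h_{\lambda_i}-\sum_c h_{\mu_c}$, where $c$ runs over the three channels $12|34$, $13|24$, $14|23$, $\mu_c$ is the unique admissible intermediate weight in channel $c$, and $h_{\omega_a}=a(n-a)/2n$ is the conformal dimension. Since each $\nu_j\in\{0,\ldots,n-1\}$, the block can be nonzero only when $\sum\nu_j\in\{0,n,2n,3n\}$. When $\sum\nu_j=0$ it is trivial; when $\sum\nu_j=n$ every channel sum satisfies $\nu_i+\nu_j\le n$, so $2n\,h_{\mu_c}=(\nu_i+\nu_j)(\nu_k+\nu_l)$ and the degree vanishes because $\sum_i\nu_i(n-\nu_i)$ and $\sum_c(\nu_i+\nu_j)(\nu_k+\nu_l)$ both equal $2\sum_{i<j}\nu_i\nu_j$; the case $\sum\nu_j=3n$ reduces to this one via the dualization $\nu_j\mapsto n-\nu_j$, which preserves the degree. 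When $\sum\nu_j=2n$ the remaining arithmetic gives exactly the two stated cases, $\nu_m$ if $\nu_M+\nu_m\le n$ and $n-\nu_M$ if $\nu_M+\nu_m\ge n$ (the two agreeing on the overlap), the cross-over being governed by which channel sums $\nu_i+\nu_j$ exceed $n$ and are therefore reduced in $h_{\mu_c}$. A useful internal check is that the final answer is invariant under $\nu_j\mapsto n-\nu_j$, which also makes it irrelevant whether one normalizes $\nu_j\equiv kn_j$ or $\nu_j\equiv -kn_j$.

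The genuinely nonformal inputs are external and I would take them as black boxes: the factorization and propagation-of-vacua properties of conformal blocks \cite{TUY89} and Fakhruddin's Chern class formula \cite{Fak09}. Granting those, the remaining work is (i) the level-$1$ fusion rules for $\mathfrak{sl}_n$, a Verlinde-formula computation, and (ii) the elementary but slightly fiddly case analysis for the four-point degree---especially the vanishing when $\sum\nu_j\in\{n,3n\}$ and the matching of the two sub-formulas when $\sum\nu_j=2n$. Since the Proposition coincides with \cite[Proposition 5.2]{Fak09}, the right move in the paper itself is simply to cite it; the outline above is how I would reconstruct the argument.
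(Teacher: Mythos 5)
Your proposal is correct and matches the paper exactly where it matters: the paper gives no proof of this statement at all, simply citing \cite[Proposition 5.2]{Fak09}, which is precisely the move you recommend. Your reconstruction (factorization over the F-curve, the level-$1$ $\mathfrak{sl}_n$ fusion ring as $\mathbb{Z}[\mathbb{Z}/n\mathbb{Z}]$ forcing $\mu_j=\omega_{\nu_j}$, and the four-point degree via $h_{\omega_a}=a(n-a)/2n$ with the case split on $\sum\nu_j\in\{0,n,2n,3n\}$) is a faithful and accurate account of how Fakhruddin's argument goes, and the arithmetic in the $\sum\nu_j=2n$ case does yield $\nu_m$ and $n-\nu_M$ as you claim.
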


From this case of Fakhruddin's formula we deduce the following:

\begin{proposition}\label{prop:cont2}
The complete linear system $|D_k^{\mathfrak{sl}_n}|$ contracts F-curves $n=\sum_{j=1}^4 n_j$, $n_1\le\cdots\le n_4$, satisfying either of the following conditions:
\begin{itemize}
\item $n_j \ge \frac{n\alpha_j}{k},~j=1,\ldots,4$, for $0\le \alpha_1 \le \cdots \le \alpha_4\in\mathbb{Z}$ with $\sum_{j=1}^4 \alpha_j = k-1$
\item $n_j \le \frac{n\beta_j}{k},~j=1,\ldots,4$, for $1\le \beta_1 \le \cdots \le \beta_4\in\mathbb{Z}$ with $\sum_{j=1}^4 \beta_j = k+1$
\end{itemize}
\end{proposition}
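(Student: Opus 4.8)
The plan is to read everything off of Fakhruddin's formula recalled just above. Since $D_k^{\mathfrak{sl}_n}$ is globally generated \cite[Lemma 2.2]{Fak09}, its complete linear system defines a morphism, and an irreducible curve is contracted by that morphism precisely when $D_k^{\mathfrak{sl}_n}$ has degree zero on it; as each F-curve is irreducible (isomorphic to $\mathbb{P}^1$), the F-curve $F$ corresponding to $n=\sum_{j=1}^{4}n_j$ is contracted by $|D_k^{\mathfrak{sl}_n}|$ if and only if $D_k^{\mathfrak{sl}_n}\cdot F=0$. So the whole statement reduces to showing that each of the two displayed conditions forces $D_k^{\mathfrak{sl}_n}\cdot F=0$.

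The first step is to repackage the formula. Among its three cases, the branch $n-\nu_M$ is always positive because $\nu_M\le n-1$, and if $\nu_m=0$ then $\nu_M+\nu_m=\nu_M\le n$, placing us in the first branch with value $\nu_m=0$. Hence $D_k^{\mathfrak{sl}_n}\cdot F\ne 0$ if and only if $\sum_{j=1}^{4}\nu_j=2n$ and every $\nu_j\ge 1$. Now set $a_j:=\lfloor kn_j/n\rfloor\in\mathbb{Z}_{\ge 0}$, so that $\nu_j=kn_j-na_j$ with $na_j\le kn_j<n(a_j+1)$. Summing gives $\sum_{j=1}^{4}\nu_j=kn-n\sum_{j=1}^{4}a_j$, so $\sum\nu_j=2n$ is equivalent to $\sum a_j=k-2$, while $\nu_j\ge 1$ is equivalent to the strict inequality $na_j<kn_j$. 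Therefore $F$ is \emph{not} contracted by $|D_k^{\mathfrak{sl}_n}|$ exactly when there exist integers $a_1,\dots,a_4\ge 0$ with $\sum_{j=1}^{4}a_j=k-2$ and $na_j<kn_j<n(a_j+1)$ for all $j$.

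Granting this reformulation, the two implications are one line each. If $n_j\ge n\alpha_j/k$ for all $j$, i.e.\ $kn_j\ge n\alpha_j$, with $\sum_{j=1}^{4}\alpha_j=k-1$, and if $F$ were not contracted, then $n\alpha_j\le kn_j<n(a_j+1)$ gives $\alpha_j\le a_j$ for each $j$, so $k-1=\sum\alpha_j\le\sum a_j=k-2$, a contradiction. If instead $n_j\le n\beta_j/k$, i.e.\ $kn_j\le n\beta_j$, with $\beta_j\ge 1$ and $\sum_{j=1}^{4}\beta_j=k+1$, and if $F$ were not contracted, then $na_j<kn_j\le n\beta_j$ gives $a_j\le\beta_j-1$ for each $j$, so $k-2=\sum a_j\le\sum(\beta_j-1)=k-3$, again a contradiction. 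In either case $D_k^{\mathfrak{sl}_n}\cdot F=0$, hence $F$ is contracted.

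The only place where any care is needed is the repackaging step: one must notice that the branch $n-\nu_M$ never vanishes, so that non-contraction is governed solely by the two conditions $\sum\nu_j=2n$ and $\nu_m\ge 1$, and then translate these into the floor quantities $a_j=\lfloor kn_j/n\rfloor$. After that, the two bulleted hypotheses are literally the two minimal ways those conditions can be violated, matched against the combinatorial identities $\sum\alpha_j=k-1$ and $\sum\beta_j=k+1$. I would also remark that with $k=d+1$ these are exactly the conditions appearing in Proposition \ref{prop:symcont}, which is precisely what will make the comparison between $|D_{d+1}^{\mathfrak{sl}_n}|$ and the morphism $\varphi$ possible.
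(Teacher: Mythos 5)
Your proof is correct and follows essentially the same route as the paper: both reduce to the observation that Fakhruddin's formula gives $D_k^{\mathfrak{sl}_n}\cdot F=0$ iff $\sum_j\nu_j\ne 2n$ or some $\nu_j=0$, and then use $\sum_j kn_j=kn$ together with the integer constraints $\sum\alpha_j=k-1$, $\sum\beta_j=k+1$ to rule out $\sum\nu_j=2n$. The only cosmetic difference is that you argue by contradiction through the floors $a_j=\lfloor kn_j/n\rfloor$ summing to $k-2$, whereas the paper directly bounds $\sum\nu_j\le n$ in the first case and $\sum\nu_j\ge 3n$ in the second; you also spell out the globally-generated and branch-analysis steps the paper leaves implicit.
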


\begin{proof} 
By Fakhruddin's formula, $D_k^{\mathfrak{sl}_n}\cdot F = 0$ if and only if $\sum_{j=1}^4 \nu_j \ne 2n$ or $\nu_j = 0$ for some $j$, where $\nu_j = kn_j~(\text{mod}~n)$.  If there are $\alpha_j\ge 0$ with $kn_j \ge n\alpha_j$, then $\nu_j \le kn_j - n\alpha_j$.  Together with the fact that $\sum_{j=1}^4 \alpha_j = k-1$, we then have \[\sum_{j=1}^4 \nu_j \le \sum_{j=1}^4 kn_j - \sum_{j=1}^4 n\alpha_j = kn - n(k-1) = n \ne 2n\] so $D_k^{\mathfrak{sl}_n}\cdot F = 0$.  Similarly, if there are $\beta_j \ge 1$ with $kn_j \le n\beta_j$, then either $\nu_j = 0$ for some $j$ or $\nu_j \ge kn_j - n(\beta_j-1)$.  Then $\sum_{j=1}^4 \beta_j = k+1$ implies that \[\sum_{j=1}^4 \nu_j \ge \sum_{j=1}^4 kn_j - \sum_{j=1}^4 n(\beta_j-1) = kn - n(k+1) + 4n = 3n \ne 2n\] so again $D_k^{\mathfrak{sl}_n}\cdot F = 0$.
\end{proof}

For our present purposes we only need to know that $|D_k^{\mathfrak{sl}_n}|$ contracts the F-curves described by this proposition, but in fact we will see later (Corollary \ref{cor:cont}) that no other F-curves are contracted, so this proposition precisely describes the zeroes of the Fakhruddin formula in the case of $D_k^{\mathfrak{sl}_n}$.

\subsection{Proof of Theorem \ref{thm:CB}} Recall that for symmetric linearization $L$ we denote the polarization on $V_{d,n}\quotient_L\text{SL}_{d+1}$ by $\mathcal{L}_d$.  The line bundle $\varphi^*\mathcal{L}_d$ is nef and $S_n$-invariant, so it spans a ray in $\text{SymNef}(\M_{0,n})\subseteq N^1(\M_{0,n})^{S_n}$.  This latter vector space has dimension $\rho := \lfloor\frac{n}{2}\rfloor - 1$ \cite[\S2.2.1]{AGSS10}.  Each linearly independent curve contracted by $\varphi$ forces $\varphi^*\mathcal{L}_d$ to lie on a face of the symmetric nef cone of one higher codimension.  Thus if we find $\rho - 1$ independent curves that are contracted, then $\varphi^*\mathcal{L}_d$ will span an extremal ray of $\text{SymNef}(\M_{0,n})$ and this set of curves uniquely determines the ray.  

The conformal blocks line bundle $D^{\mathfrak{sl}_n}_{d+1}$ is also nef and $S_n$-invariant, and in \cite{AGSS10} a set of $\rho - 1$ independent symmetric F-curves that it contracts is described.  Thus to show that $\varphi^*\mathcal{L}_d$ and $D^{\mathfrak{sl}_n}_{d+1}$ span the same ray it is enough to show that $\varphi$ contracts this same set of F-curves.  The set used in \cite{AGSS10} depends on certain divisibility relations between $d+1$ and $n$, but in each case one can verify that the curves it contains satisfy the hypotheses of Proposition \ref{prop:cont2}, so that by Proposition \ref{prop:symcont} they are contracted by $\varphi$.

For example, when $(d+1)|n$ write $q := \frac{n}{d+1}$ and let $F_{a,b,c}$ denote a symmetric F-curve corresponding to $n=a+b+c+(n-(a+b+c))$.  The family is then defined as \[\{F_{i,1,1}~|~1\le i \le \lfloor\frac{n}{2}\rfloor-1, q \nmid (i+1)\}\cup \{F_{i,q,q}~|~1\le i \le \lfloor\frac{n}{2}\rfloor-1, q|(i+1)\}\] with the last $F_{i,q,q}$ curve removed \cite[\S6.2]{AGSS10}.  We claim the $F_{i,1,1}$ are contracted by the first inequality in Proposition \ref{prop:symcont}, whereas the $F_{i,q,q}$ are contracted by the second inequality.  For this range of $i$, we have $i\le n-i-2$, so $F_{i,1,1}$ corresponds to $n=\sum_{j=1}^4 n_j$ with $(n_1,n_2,n_3,n_4) = (1,1,i,n-i-2)$, $n_1 \le \cdots \le n_4$.  Setting $(\alpha_1,\alpha_2,\alpha_3,\alpha_4) = (0,0,\lfloor\frac{i}{q}\rfloor,d-\lfloor\frac{i}{q}\rfloor)$, we only have to check that $n_j \ge q\alpha_j$ for $j=3,4$.  For $j=3$ this is obvious: $n_3 = i \ge q\lfloor\frac{i}{q}\rfloor$.  For $j=4$ we have to verify the inequality $n-i-2 \ge q(d-\lfloor\frac{i}{q}\rfloor)$, or equivalently, using that $n=q(d+1)$, the inequality $q\lfloor\frac{i}{q}\rfloor \ge i + 2 - q$.  In general, $\lfloor\frac{i}{q}\rfloor \ge \frac{i-q+1}{q}$ holds, but the hypothesis $q\nmid(i+1)$ implies the stronger inequality $\lfloor\frac{i}{q}\rfloor \ge \frac{i-q+2}{q}$, from which the desired inequality immediately follows.  For the $F_{i,q,q}$ curves we have $(n_1,n_2,n_3,n_4)=(i,n-i-2q,q,q)$, so we set $(\beta_1,\beta_2,\beta_3,\beta_4)=(\lceil\frac{i}{q}\rceil,d-\lceil\frac{i}{q}\rceil,1,1)$ and must check $n_j \le q\beta_j$ for $j=1,2,3,4$.  This is automatic for $j=1,3,4$, and for $j=2$ we must verify $n-i-2q\le q(d-\lceil\frac{i}{q}\rceil)$.  This latter inequality boils down to $\lceil\frac{i}{q}\rceil \le \frac{i}{q} + 1$, which holds even without the hypothesis that $q|(i+1)$. 

Thus the family of $\rho-1$ curves is contracted by $\varphi$ when $(d+1)|n$.  The remaining cases \cite[Definitions 7.1 and 9.1]{AGSS10} are similar. \hfill $\Box$

\begin{remark}
The families of contracted F-curves used to prove extremality in \cite{AGSS10} were found experimentally and do not appear to have a very geometric description.  The preceding proof sheds some light on their definition, since it gives a geometric reason why these particular F-curves are contracted.  In the case $(d+1)|n$ we witness some structure to the family: it consists of two types of curves, $F_{i,1,1}$ and $F_{i,q,q}$, and our proof shows that these correspond to the two types of geometric contractions described in Proposition \ref{prop:symcont}.  When $(d+1)\nmid n$ the family still consists of curves that fall into these two types of geometric contractions, but the dichotomy is less transparent.  Moreover, the heart of the extremality proof in \cite{AGSS10} is to prove that the curves in each family are independent, and our present approach does not seem to reveal anything in this regard.  It would be interesting to find a purely geometric construction of the families in all cases.
\end{remark}

\subsection{Consequences} We immediately derive two results from Theorem \ref{thm:CB} and the work leading up to it.  One is that since $D_{d+1}^{\mathfrak{sl}_n}$ and $\varphi^*\mathcal{L}_d$ span the same ray in $N^1(\M_{0,n})$, and the former spans an extremal ray in the symmetric nef cone, the latter must do so as well.  This is Corollary \ref{cor:extremal}.

A second consequence is that $D_{d+1}^{\mathfrak{sl}_n}$ and $\varphi^*\mathcal{L}_d$ contract the same set of F-curves, but Proposition \ref{prop:symcont} completely describes that set of curves, so we see that the list in Proposition \ref{prop:cont2} must be complete as well:

\begin{corollary}\label{cor:cont}
The complete linear system $|D_k^{\mathfrak{sl}_n}|$ does not contract any further F-curves than those listed in Proposition \ref{prop:cont2}.
\end{corollary}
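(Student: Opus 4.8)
The plan is to run the argument already sketched in the paragraph preceding the corollary statement, making each implication precise. The key point is that Theorem \ref{thm:CB} identifies the rays spanned by $\varphi^*\mathcal{L}_d$ and $D^{\mathfrak{sl}_n}_{d+1}$ in $N^1(\M_{0,n})$, and two nef classes that span the same ray must contract exactly the same set of curves (an irreducible curve $F$ is contracted by a nef divisor $D$ iff $D\cdot F = 0$, a condition invariant under positive scaling). So $|D^{\mathfrak{sl}_n}_{d+1}|$ contracts an F-curve $F$ if and only if $\varphi$ contracts $F$. But Proposition \ref{prop:symcont} gives a complete characterization—``and no others''—of the symmetric F-curves contracted by $\varphi$ when $L$ is the symmetric linearization, and these are precisely the curves described by the two displayed inequality systems there. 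Matching those inequalities against the two systems in Proposition \ref{prop:cont2} (with $k = d+1$, so $k-1 = d$ and $k+1 = d+2$) shows the two lists coincide, which yields the claim for all $k$ in the relevant range $2 \le k \le n-2$; for $k$ outside that range, or when $k = d+1$ exceeds $\lfloor n/2\rfloor$, one invokes the Dynkin symmetry $D^{\mathfrak{sl}_n}_k = D^{\mathfrak{sl}_n}_{n-k}$ recorded in the introduction to reduce to the case already handled.

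First I would state the reduction cleanly: fix $k$ with $2 \le k \le \lfloor n/2\rfloor$ and set $d = k-1$, so that Theorem \ref{thm:CB} applies and $\varphi : \M_{0,n} \to V_{d,n}\quotient_L\mathrm{SL}_{d+1}$ is the morphism for the symmetric linearization $L = (\tfrac{d+1}{n},\ldots,\tfrac{d+1}{n})$. Since $\varphi^*\mathcal{L}_d$ is the pullback of an ample class and $\varphi$ is a morphism of projective varieties, $\varphi$ contracts an F-curve $F$ exactly when $\varphi^*\mathcal{L}_d \cdot F = 0$. By Theorem \ref{thm:CB}, $\varphi^*\mathcal{L}_d$ and $D^{\mathfrak{sl}_n}_{d+1}$ are positive multiples of one another, so $D^{\mathfrak{sl}_n}_{d+1}\cdot F = 0$ iff $\varphi^*\mathcal{L}_d\cdot F = 0$ iff $F$ is $\varphi$-contracted. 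Then I would cite Proposition \ref{prop:symcont}, which asserts that the $\varphi$-contracted symmetric F-curves are exactly those satisfying the first or second inequality system there, and observe that upon substituting $k = d+1$ these are verbatim the two systems in Proposition \ref{prop:cont2}. Hence the F-curves with $D^{\mathfrak{sl}_n}_{d+1}\cdot F = 0$ are precisely those in Proposition \ref{prop:cont2}, which is the assertion of the corollary for $k = d+1$.

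To cover all $k \in \{2,\ldots,n-2\}$ I would note that $\lfloor n/2\rfloor$ is the midpoint, and for $k > \lfloor n/2\rfloor$ the Dynkin-diagram symmetry gives $D^{\mathfrak{sl}_n}_k = D^{\mathfrak{sl}_n}_{n-k}$ with $n-k \le \lfloor n/2\rfloor$, so the F-curves contracted by $|D^{\mathfrak{sl}_n}_k|$ are those contracted by $|D^{\mathfrak{sl}_n}_{n-k}|$; one then checks that the two inequality systems of Proposition \ref{prop:cont2} are themselves invariant under $k \mapsto n-k$ (replacing $\alpha_j$ by $\beta_j'$ and vice versa, i.e.\ the roles of the two systems swap), so the statement for $k$ follows from that for $n-k$. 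The main obstacle—though it is more bookkeeping than genuine difficulty—is verifying this last compatibility: that the ``first inequality'' family for $D^{\mathfrak{sl}_n}_k$ corresponds under $k \leftrightarrow n-k$ to the ``second inequality'' family for $D^{\mathfrak{sl}_n}_{n-k}$, which requires rewriting $n_j \ge \tfrac{n\alpha_j}{k}$ as a ceiling/floor inequality and tracking how the constraint $\sum\alpha_j = k-1$ transforms into $\sum\beta_j = (n-k)+1$. Since Fakhruddin's formula itself is manifestly symmetric in $k \mapsto n-k$ (the quantities $\nu_j = kn_j \bmod n$ and $n'n_j = (n-k)n_j \bmod n = -\nu_j \bmod n$ produce the same condition $\sum\nu_j = 2n$ after reindexing), an alternative and cleaner route is simply to re-derive Proposition \ref{prop:cont2}'s characterization directly from Fakhruddin's formula as an \emph{iff} rather than an implication—but given that Proposition \ref{prop:symcont} already provides the completeness for $k \le \lfloor n/2\rfloor$, the symmetry argument suffices and is shorter.
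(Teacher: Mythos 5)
Your argument is correct and is essentially the paper's own proof: the paper likewise deduces the corollary from Theorem \ref{thm:CB} (same ray in $N^1(\M_{0,n})$ implies the same contracted F-curves) together with the ``and no others'' clause of Proposition \ref{prop:symcont} and the observation that its inequality systems match those of Proposition \ref{prop:cont2} under $k=d+1$. Your additional care with the range $k>\lfloor n/2\rfloor$ via the symmetry $D_k^{\mathfrak{sl}_n}=D_{n-k}^{\mathfrak{sl}_n}$ is a detail the paper leaves implicit, but it does not change the approach.
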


A third consequence is described in the following section.

\section{Gale Duality}\label{section:Gale}

\subsection{Background} The Gale transform, sometimes called \emph{association}, is a classical construction associating to a configuration of $n$ sufficiently general points in $\mathbb{P}^d$, up to projectivity, a configuration of $n$ general points in $\mathbb{P}^{n-d-2}$, up to projectivity.    In \cite[Corollary III.1]{DO88} this was extended, using GIT, to partial compactifications of the space of general configurations, yielding \[(\mathbb{P}^d)^n\quotient_L\text{SL}_{d+1} \cong (\mathbb{P}^{n-d-2})^n\quotient_L\text{SL}_{n-d-1}\] where $L$ denotes a symmetric linearization.  Note that the space of linearizations on the left-hand side is the hypersimplex $\Delta(d+1,n)$, whereas on the right-hand side it is the hypersimplex $\Delta(n-d-1,n)$.  However, these two polytopes are naturally isomorphic: the former is the convex hull of all $\binom{n}{d+1}$ vectors in $\{0,1\}^n$ with 1 occurring in $d+1$ entries and the latter is the convex hull of those with 0 occurring in $d+1$ entries.  Under this identification, it was shown in \cite{Hu05b} that the above isomorphism of GIT quotients applies for arbitrary linearizations, not just symmetric ones.  The Gale transform extends to Chow quotients as well \cite[Corollary 2.3.14]{Kap93}.

Perhaps the easiest way to describe the Gale transform is in terms of Grassmannians, via the Gelfand-MacPherson correspondence \cite{GM82}.  For both Chow and GIT quotients there is an isomorphism \[(\mathbb{P}^d)^n\quotient\text{SL}_{d+1} \cong \text{Gr}(d+1,n)\quotient (\mathbb{C}^*)^n\] coming from the fact that both sides may be viewed as double quotients of the same space \cite[\S2.2]{Kap93}.  Indeed, $(\mathbb{C}^*)^n$ acts on the space of matrices $M_{d+1\times n}$ by scaling each column, and $\text{GL}_{d+1}$ acts on the left by multiplication.  The quotient by the torus first then $\text{GL}_{d+1}$ is the left-hand side of this isomorphism, and the quotient by $\text{GL}_{d+1}$ then the torus is the right-hand side.  The Gale transform may then be viewed as an isomorphism \[\text{Gr}(d+1,n)\quotient(\mathbb{C}^*)^n \cong \text{Gr}(n-d-1,n)\quotient(\mathbb{C}^*)^n\] which is obtained by simply taking orthogonal complements.  One sees immediately from this description that the Gale transform is involutive.

\subsection{Configurations on curves}\label{section:configs} As is described in \cite{EP00}, Goppa recognized that the Gale transform of a configuration of $n$ distinct points supported on a rational normal curve in $\mathbb{P}^d$ is supported on a rational normal curve in $\mathbb{P}^{n-d-2}$.  Thus the Gale transform $\Gamma$ sends $U_{d,n}/\text{SL}_{d+1}$ to a subset of $U_{n-d-2,n}/\text{SL}_{n-d-1}$, and similarly $\Gamma(U_{n-d-2,n}/\text{SL}_{n-d-1})\subseteq U_{d,n}/\text{SL}_{d+1}$.  But the Gale transform is an involutive isomorphism, so the arrows in the top row of the following commutative diagram must be isomorphisms:
\[\xymatrix{ U_{d,n}/\text{SL}_{d+1} \ar@{^{(}->}[d] \ar[r] & U_{n-d-2,n}/\text{SL}_{n-d-1} \ar[r] \ar@{^{(}->}[d] & U_{d,n}/\text{SL}_{d+1} \ar@{^{(}->}[d] \\ V_{d,n}\quotient_L\text{SL}_{d+1} \ar@{^{(}->}[d] \ar[r] & V_{n-d-2,n}\quotient_L\text{SL}_{n-d-1} \ar[r] \ar@{^{(}->}[d] & V_{d,n}\quotient_L\text{SL}_{d+1} \ar@{^{(}->}[d] \\ (\mathbb{P}^d)^n\quotient_L\text{SL}_{d+1} \ar[r] & (\mathbb{P}^{n-d-2})^n\quotient_L\text{SL}_{n-d-1} \ar[r] & (\mathbb{P}^d)^n\quotient_L\text{SL}_{d+1}}\]
By Lemma \ref{lem:Vdef} we can view $V_{d,n}$ as the topological closure of $U_{n,d}$ so the Gale transform restricts to give maps as in the middle row of this diagram, and for the same reason they must be isomorphisms.  Therefore, Goppa's result extends to quasi-Veronese curves: the Gale transform of a configuration supported on a quasi-Veronese curve lies on a quasi-Veronese curve.  Note that in \cite[Corollary 3.2]{EP00} it is shown that Goppa's result extends to Gorenstein curves, but our quasi-Veronese curves include examples of non-Gorenstein curves.  For instance, a spatial triple point as in Figure \ref{fig:cubic} is not Gorenstein.

\subsection{Conformal blocks symmetry} It is well-known that $D^{\mathfrak{sl}_n}_{k} = D^{\mathfrak{sl}_n}_{n-k}$.  One way to see this is by the symmetry of the Dynkin diagram $\circ-\circ-\cdots-\circ-\circ$ for $\mathfrak{sl}_n$.  From Theorem \ref{thm:CB} it then follows that if 
\[\varphi_1 : \M_{0,n} \rightarrow V_{d,n}\quotient_L\text{SL}_{d+1},~\varphi_2 : \M_{0,n} \rightarrow V_{n-d-2,n}\quotient_L\text{SL}_{n-d-1}\] 
then the pulled back symmetric polarizations $\varphi_1^*\mathcal{L}_d$ and $\varphi_2^*\mathcal{L}_{n-d-2}$ span the same ray in $N^1(\M_{0,n})$.  Thus $\varphi_1$ and $\varphi_2$ have the same Stein factorizations up to a finite map, so the normalizations of their images are isomorphic: \[(V_{d,n}\quotient_L\text{SL}_{d+1})^\nu\cong (V_{n-d-2,n}\quotient_L\text{SL}_{n-d-1})^\nu\]  This proves Corollary \ref{cor:Gale}.  Of course, this isomorphism also follows from \S\ref{section:configs}, but by allowing the normalization here we derive the result without, strictly speaking, invoking the Gale transform.

\subsection{Self-association of $\M_{0,2m}$}\label{section:selfassoc}
Consider configurations of $2m$ points in $\mathbb{P}^{m-1}$.  Here the source and target are the same, so the Gale transform provides an involution $\Gamma\in\text{Aut}((\mathbb{P}^{m-1})^{2m}\ChowQ\text{SL}_m)$.  Fixed points of this automorphism are called \emph{self-associated} configurations.  Such configurations have been studied classically.  See \cite{DO88,EP00} and the references therein, as well as \cite[Corollary 2.3.10]{Kap93}.  It is proven in \cite{Fla98} that any configuration of $2m$ distinct points on a rational normal curve of degree $m-1$ is self-associated, so $\Gamma$ restricts to the identity on $U_{m-1,2m}/\text{SL}_m \subseteq (\mathbb{P}^{m-1})^{2m}\ChowQ\text{SL}_m$.  It therefore is the identity on the topological closure of this locus as well.  But the closure is precisely the Chow quotient $V_{m-1,2m}\ChowQ\text{SL}_m$, which by Theorem \ref{thm:morphism} is isomorphic to $\M_{0,2m}$.  Thus, under the identification \[\M_{0,2m}~\widetilde{\rightarrow}~V_{m-1,2m}\ChowQ\text{SL}_m \hookrightarrow (\mathbb{P}^{m-1})^{2m}\ChowQ\text{SL}_m\] each point $(C,p_1,\ldots,p_{2m})$ corresponds to a self-associated configuration.

\bibliographystyle{amsalpha}

\end{document}